\newtheorem{Thm}{Theorem}[section]
\newtheorem{Lem}[Thm]{Lemma}
\newtheorem{Prop}[Thm]{Proposition}
\newtheorem{Cor}[Thm]{Corollary}
\theoremstyle{remark}
\newtheorem{Def}[Thm] {Definition}
\newtheorem{Rem}[Thm] {Remark}
\newcommand{\diam}{\operatorname{diam}}
\newcommand{\length}{\operatorname{\length}}
\def\length{\operatorname{length}}
\keywords{Chain hyperbolicity, Closing property, 
Hyperbolic periodic measure}
\title[hyperbolic periodic points for $CH^1(p)$ ]{Hyperbolic periodic points for  chain hyperbolic homoclinic classes}
\author
{Wenxiang Sun$^{\dag}$, Yun Yang$^{*}$}
 \thanks{ $^{\dag}$ Sun is supported
by NNSFC(\# 10831003) and Doctoral Education Foundation of
China}
\address{$^{*, \dag}$ School of Mathematical Sciences,
Peking University, Beijing 100871, China; Email: $^{\dag}$sunwx@math.pku.edu.cn, $^{*}$yangy88@math.pku.edu.cn.}
\thanks{$^{*}$ Communication author}
\thanks{2010 {\it Mathematics Subject Classification}.  37D30, 37D20,37C29}
\begin{document}
\maketitle

\def\abstractname{\textbf{Abstract}}
\begin{abstract}
In this paper we establish a closing
property and a hyperbolic closing property  for  thin
trapped chain hyperbolic homoclinic classes with one dimensional
center in partial hyperbolicity setting.    Taking advantage of theses
properties, we prove that the growth rate of the number of
hyperbolic periodic points is equal to the topological entropy.  We also
obtain that the hyperbolic periodic measures are dense in the space of invariant measures.
\end{abstract}

\section{Introduction}
Chain hyperbolic homoclinic classes are a kind of homoclinic classes with dominated splitting and some topological hyperbolicity,  which is put forward by S. Crovisier and E. Pujals in \cite{C2}. 
 They  present the fact that 
homoclinic classes of generic diffeomorphisms far from
homoclinic tangencies and heterodimensional cycles are
chain hyperbolic homoclinic classes \cite{C1,C2} . Thus chain hyperbolic homoclinic classes are very common in diffeomorphisms far away from homoclinic bifurcations. 
In contrast with the uniform hyperbolic part in the Palis conjecture \cite{Palis1,Palis2}  for the whole dynamics,  chain hyperbolic homoclinic class is a generalization of the basic set of Axiom A diffeomorphisms. 
  
The connection between the growth rate of the number of periodic points and the topological entropy of dynamical systems  attracts people's attention for a long time. 
In his paper \cite{Bowen}, R. Bowen proved that  the  growth rate of
the hyperbolic periodic points is equal to the topological entropy for a basic set of Axiom A
diffeomorphisms. 
Since chain
hyperbolic homoclinic class generalizes the  basic
set of Axiom A diffeomorphism, a natural proposal is to consider the  hyperbolic periodic points  for chain hyperbolic
systems. However, this is not a trivial question. The difficulties arise  not only in proving the existence of sufficiently many periodic points but also in  controlling the Lyapunov exponents  of sufficiently many  hyperbolic periodic points. To solve these difficulties, using different methods, we shall prove some new kind of closing properties for the chain hyperbolic systems in our setting.  
 
In this paper, we require that the dominated splitting of the chain hyperbolic homoclinic class is partially hyperbolic with one dimensional center.  We prove that the growth rate of the number of hyperbolic periodic points is equal to the topological entropy for  thin trapped chain hyperbolic homoclinic classes with one dimensional center.  

Now we start to state our results precisely. Before formulating the definition of chain hyperbolic homoclinic class,  we recall some useful  terminology first. 
Let $f$ be a diffeomorphism on a compact manifold $M$  and $\Lambda$ be a compact invariant set. A {\it dominated splitting} on $\Lambda$ is an invariant decomposition of $T_{\Lambda}M=E\oplus F$ such that for some integer
$N\geq 1$, any unit vectors $u_{x}\in E_{x}, v_{x}\in F_{x}$, at any point $x\in \Lambda$  the following is satisfied:
$$2\parallel Df^{N}u_{x}\parallel\leq \parallel Df^{N}v_{x}\parallel.$$ A {\it partially hyperbolic splitting} on $\Lambda$ is a dominated splitting $T_{\Lambda}M=E^{s}\oplus E^{c}\oplus E^{u}$ such that
$E^{s}$ and $E^{u}$ uniformly contacted by $f$ and $f^{-1}$ respectively.
%When the bundle $E$ is uniformly contracted, i.e. there exists $N\geq 1$ such that for any $u\in E$ one has $
%\parallel Df^{N}u\parallel<1$, the  stable manifold of $x$ is defined as $W^{ss}(x)$. Similarly, we can define $W^{uu}(x)$. 
A {\it plaque family} tangent to bundle $E$ is a family of continuous maps $\mathcal{W}$ from the linear bundle $E$ over $\Lambda$
into $M$ satisfying:
\begin{itemize}\item
for each $x\in \Lambda$, the induced map $\mathcal{W}_{x}:E_{x}\rightarrow M$ is a $\mathcal{C}^1$- embedding that satisfies $\mathcal{W}_{x}(0)=x$
and whose image is tangent to $E_{x}$ at $x$;
 \item
$(\mathcal{W}_{x})_{x\in \Lambda}$ is a continuous family of $\mathcal{C}^1$- embeddings.
\end{itemize}
Let $\mathcal{W}(x)$ be the image of embedding $\mathcal{W}_x$  at point $x$. A plague family $\mathcal{W}$ is {\it locally invariant} if there
exists $\rho>0$ such that for each $x\in\Lambda$ the image of the ball
$B(0,\rho)\subseteq E_{x}$ by $f\circ\mathcal{W}_{x}$ is contained
in the plaque $\mathcal{W}(f(x))$. 

Plaque family theorem
(Theorem 5.5 in \cite{HPS}) says that there always exists a locally invariant plaque family tangent to
$E$ (but is not unique in general).  Moreover, from the local integrable property, the images of  invariant plaque family  on compact manifold
have uniform length.  
 A plaque family is  called {\it trapped} if for each $x\in \Lambda$, one has
$f(\overline{\mathcal{W}(x)})\subseteq \mathcal{W}(f(x))$. It is {\it thin trapped} if for any neighborhood $U$ of the section $0$ in $E$ there exist:
\begin{itemize}\item a continuous family $(\varphi_{x})_{x\in\Lambda}$ of $\mathcal{C}^1$-diffeomorphisms of the spaces $(E_{x})_{x\in\Lambda}$ supported in $U$;
 \item a constant $\rho>0$, such that for any $x\in\Lambda$, one has
$f(\overline{\mathcal{W}_{x}\circ\varphi_{x}(B(0,\rho))})\subset
\mathcal{W}_{f(x)}\circ \varphi_{f(x)}(B(0,\rho))$.
\end{itemize}
Thus it can be seen that ``thin trapped" means that there exist nested families of trapped plaques whose diameters are arbitrarily small. 
\smallskip

Let $p$ be a hyperbolic periodic point for $f$. The {\it homoclinic class} $H(p)$ is defined as the closure set 
of  the collection of  transversal intersection points of the unstable manifolds $W^{u}(O)$ and the stable manifolds $W^{s}(O)$ of the  orbit $O$ of $p$. 
$H(p)$ also coincides with the closure set of the collection of
hyperbolic  periodic points $q$ that are homoclinically related to the orbit
of $p$, i.e. for some $i$, $W^{u}(q)$ and $W^{s}(q)$ have transversal intersection
points with the stable manifolds $W^{s}(f^i(p))$ and the unstable manifolds $W^s(f^i(p))$ of  point $f^i(p)$
respectively.

We are now ready for the definition of chain hyperbolic homoclinic class. 
\begin{Def}\label{chain} A homoclinic class $H(p)$ is called  {\it chain hyperbolic} if:
\begin{itemize}\item
$H(p)$ has a dominated splitting $T_{H(p)}M=E^{cs}\oplus E^{cu}$;
 \item there exist a plaque family $(\mathcal{W}^{cs}_{x})_{x\in H(p)}$
tangent to $E^{cs}$ which is trapped by $f$ and a plaque family
$(\mathcal{W}^{cu}_{x})_{x\in H(p)}$ tangent to $E^{cu}$ which is
trapped by $f^{-1}$;
 \item there exists a hyperbolic periodic point $q_{s} (\text{resp. }  q_{u})$ homoclinically related to the orbit of $p$ whose stable manifold contains $\mathcal{W}^{cs}(q_s)$(resp. whose unstable manifold contains $\mathcal{W}^{cu}(q_u)$) .$ $
\end{itemize}

\end{Def}
 In this paper, we study only the case when the dominated splitting of the chain hyperbolic homoclinic class is partially hyperbolic  with one dimensional center.  We formulate our objects in the following definition. 
\begin{Def}\label{CH^1(p)}
 Let $CH^1(p)$  be  a chain hyperbolic homoclinic class with dominated splitting $$T_{CH^1(p)}M=E^{cs}\oplus E^u=E^{s}\oplus E^c\oplus E^{u}$$ where $E^{u}$ is a uniformly expansive bundle,  $E^{s}$  is
a uniformly contracting bundle, the plaque family
$\mathcal{W}^{cs}$ associated with $E^{s}\oplus E^c$ is thin trapped for $f$ and $\dim E^c=1$.
\end{Def}
%Throughout this paper, let $CH^1(p)$ be the homoclinic class identified in Definition \ref{CH^1(p)}. 

Denote the set of periodic points with period $n$ and the center Lyapunov exponents less than $-a$ by
\begin{eqnarray*}  P^a_n(f|_{CH^1(p)})  &=&\{x\in CH^1(p)\,|\,f^n(x)=x, \lambda(x)\geq a\geq 0 ,\\
&& \text{where } \lambda(x) \text{ satisfies } \|Df^{n}|_{E^{c}(f^i(x))}\|= e^{-\lambda(x)n}\}.
\end{eqnarray*}
As $\dim E^c=1$, $\lambda(x)$ is exactly the Lyapunov exponent of $x$ along $E^c$. It is worth to note that this is not true for $\dim E^c\geq2$, because the norm of a matrix  may be strictly greater than the eigenvalues of the matrix. 
Our main theorems are the following ones. 
\begin{Thm}\label{growth of periodic points } Let $f$ be a diffeomorphism on a compact manifold $M$  with  $CH^1(p)$. Then, we have $$\lim_{a\rightarrow 0}\limsup_{n\rightarrow +\infty}\frac{1}{n}\log
\sharp   P^a_n(f|_{CH^1(p)})= h_{\text{top}}(f|_{CH^1(p)}).$$
\end{Thm}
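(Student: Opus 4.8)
\emph{Proof proposal.} The plan is to prove the two inequalities separately. The bound ``$\le$'' will follow from expansivity of uniformly hyperbolic sets, while ``$\ge$'' will combine the variational principle with the closing property and the hyperbolic closing property for thin trapped chain hyperbolic classes.

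For ``$\le$'', fix $a>0$. Any $x\in P^a_n$ is a hyperbolic periodic point of stable index $\dim E^s+1$: $E^s$ is uniformly contracted, $E^u$ uniformly expanded, and $\|Df^n|_{E^c(x)}\|=e^{-\lambda(x)n}\le e^{-an}$, so, using the uniform angle between the bundles over the compact set $CH^1(p)$, the contraction along $E^{cs}=E^s\oplus E^c$ is uniform; hence the hyperbolicity constants of all orbits in $\bigcup_n P^a_n$ are uniform in $n$. By expansivity of a uniformly hyperbolic set there is $\varepsilon_0=\varepsilon_0(a)>0$ such that if $x,y\in P^a_n$ satisfy $d(f^ix,f^iy)\le\varepsilon_0$ for $0\le i<n$ then, by periodicity, this holds for all $i\in\ZZ$, forcing $y\in W^{cs}_{\loc}(x)\cap W^u_{\loc}(x)=\{x\}$. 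Thus $P^a_n$ is an $(n,\varepsilon_0)$-separated subset of $CH^1(p)$, so $\limsup_n\frac1n\log\sharp P^a_n\le h_{\text{top}}(f|_{CH^1(p)})$; letting $a\to0$ preserves this.

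For ``$\ge$'', fix $\delta>0$ and use the variational principle to pick an ergodic $\mu$ on $CH^1(p)$ with $h_\mu(f)>h_{\text{top}}(f|_{CH^1(p)})-\delta$. Two ingredients enter. First, we will show that the thin trapped hypothesis on $\mathcal W^{cs}$ forces the center exponent $\lambda^c(\mu)=\int\log\|Df|_{E^c}\|\,d\mu$ to be non-positive for every invariant $\mu$: a short arc tangent to $E^c$ sitting inside an arbitrarily thin trapped $cs$-plaque has all forward iterates confined to thin $cs$-plaques, whereas a positive center exponent would make those iterates grow exponentially in length (using forward invariance of a narrow cone around $E^c$). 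Second, fixing the periodic point $q_s$ of Definition~\ref{chain}, with $\lambda^c(q_s)=-\beta<0$, and setting $\kappa_a=a/(\beta-a)$ for small $a>0$, we will produce many elements of $P^a_N$: by Katok's entropy formula (together with the Birkhoff theorem applied to $\log\|Df|_{E^c}\|$) there is, for small $\varepsilon>0$ and large $n$, an $(n,\varepsilon)$-separated set $\mathcal S_n$ of $\mu$-regular points with $\sharp\mathcal S_n\ge e^{n(h_\mu-\delta)}$ and with center averages within $\delta$ of $\lambda^c(\mu)$; for each $x\in\mathcal S_n$ we concatenate its length-$n$ orbit segment, a short pseudo-orbit connection to $O(q_s)$ (of length bounded by the chain-transitivity constant of $CH^1(p)$, hence $o(n)$), about $\kappa_a n$ iterates around $O(q_s)$, and a short connection back, obtaining a periodic pseudo-orbit of period $N=(1+\kappa_a)n+o(n)$ whose center average is $\le-a$ (here $\lambda^c(\mu)\le0$ and $\kappa_a\beta>a(1+\kappa_a)$ are used). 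Applying the hyperbolic closing property to this pseudo-orbit yields a genuine periodic point of period $N$, shadowing the pseudo-orbit within $\varepsilon$ and with center Lyapunov exponent $\le-a$, i.e. a point of $P^a_N$; distinct $x\in\mathcal S_n$ give distinct such points.

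Hence $\limsup_N\frac1N\log\sharp P^a_N\ge(h_\mu-\delta)/(1+\kappa_a)$, and letting $a\to0$ (so $\kappa_a\to0$) and then $\delta\to0$ gives $\lim_{a\to0}\limsup_N\frac1N\log\sharp P^a_N\ge h_{\text{top}}(f|_{CH^1(p)})$, which with the first part finishes the proof. I expect the main obstacle to be the passage from the periodic pseudo-orbit to the genuine periodic point with the prescribed negative center exponent --- this is precisely the content of the closing and hyperbolic closing properties, and the delicate point is that the center exponent of the closed orbit must be controlled by the center average along the pseudo-orbit while the entropy-free connecting segments have length $o(n)$ so as not to dilute the exponential growth rate. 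A secondary technical point is the derivation, from thin trappedness, that $\lambda^c(\mu)\le0$ for every invariant $\mu$ (so that $\kappa_a\to0$ and the full entropy is recovered as $a\to0$), which is closely tied to the fact that all periodic orbits homoclinically related to $p$ have center-contracting index.
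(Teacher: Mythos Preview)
Your upper bound matches the paper's: both show $P^a_n$ is $(n,\xi(a))$-separated for some $\xi(a)>0$ depending only on $a$ (the paper cites Lemma~3.3 of \cite{C2} on uniform-size stable plaques; you argue via uniform hyperbolicity constants directly), hence $\limsup_n\frac{1}{n}\log\sharp P^a_n\le h_{\text{top}}$.

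For the lower bound your route is correct but more roundabout than the paper's. The paper never invokes invariant measures, the variational principle, or Katok's formula; it works directly from $h_{\text{top}}=\lim_{\varepsilon\to 0}\limsup_n\frac{1}{n}\log S(n,\varepsilon)$. To each point of a maximal $((1-a)n,4\varepsilon)$-separated set one applies the hyperbolic closing property (Definition~\ref{Specification2}) \emph{as a black box} to the orbit segment of length $(1-a)n$, obtaining a periodic point of period in $[n,n+2M(\varepsilon)]$ with center exponent $\le -a\lambda/4$; this gives $(1-a)h_{\text{top}}\le\limsup_n\frac{1}{n}\log\sharp P^a_n$ immediately (Proposition~\ref{exponential control}). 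Your explicit concatenation with $q_s$, your estimate $\lambda^c(\mu)\le 0$ from thin-trappedness, and your exponent control of the closed orbit are precisely the ingredients already packed inside the proof of Theorem~\ref{specification} (Claim~3 and the final displayed chain of inequalities there), so you are essentially re-deriving the hyperbolic closing property inside the proof of the theorem that is meant to \emph{use} it. Two minor points to fix in your version: the hyperbolic closing property as stated takes a genuine orbit segment, not a pseudo-orbit, so either apply it directly to each $x\in\mathcal S_n$ and drop your intermediate construction, or use the ordinary closing property (Definition~\ref{Specification}) for two segments and supply the exponent estimate separately; and take $\mathcal S_n$ to be $(n,4\varepsilon)$-separated (or shadow within $\varepsilon/3$) so that distinct $x$ demonstrably yield distinct periodic points.
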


 Related to Theorem \ref{growth of periodic points } is a result by Katok \cite{K} that for $\mathcal{C}^{1+\alpha}$ diffeomorphisms on surface manifolds,  the growth rate of the number of  periodic points is greater than or equal to the topological entropy.      Based on Katok's construction of horseshoes for diffeomorphisms on surface manifolds, Chung and Hirayama \cite{CH} proved that the topological entropy of a $\mathcal{C}^{1+\alpha}$ surface diffeomorphism is given by the growth rate of the number of periodic points of saddle type.  Usually, there may not be sufficiently many hyperbolic periodic points. The observation in Theorem \ref{growth of periodic points } provides  us with many hyperbolic periodic points. \smallskip 

The following closing properties serve as the main tools to find periodic points.  The interest of seeking for hyperbolic periodic points leads us to the  definition, hyperbolic closing property. \begin{Def}\label{Specification}Let $f$ be a diffeomorphism on a compact manifold $M$ and let $K$ be  an compact invariant subset of $M$. The following property is called
{\it a closing property} : for any $\varepsilon>0$ small enough, there
exists $M(\varepsilon)\in\mathbb{N}$ such that for any two segments $x, f^n(x)\in K$ and  $y, f^m(y)\in K$,   there exists a periodic point $z\in K$ with period $P(z)$ such that 
\begin{enumerate}
\item $d(f^i(z), f^i(x))\leq \varepsilon, 0\leq i\leq n$;
\item $d(f^{n+k+j}(z), f^j(y))\leq \varepsilon, 0\leq j\leq m$ for some $0\leq k\leq M(\varepsilon)$;
\item $n+m+k\leq P(z)\leq n+m+2M(\varepsilon)$.
\end{enumerate}
Moreover, the parallel result holds also for more than two segments.
\end{Def}
This closing property is similar to the specification property for Axiom A diffeomorphisms (\cite{S}) when $y=f^{n+k}(x)$.  From the construction of the shadowing periodic point $z$ in section 3, it follows that when $y=x$ and $m=n$, the period of $z$ can be $n+k$, which is similar to the classical closing lemma. The only difference from the classical closing lemma is that there is a spare segment.   Nevertheless, the closing property in Definition \ref{Specification} can also be used to prove Corollary \ref{distribution of periodic measures}. 
%The only difference is that the $k$ in Definition \ref{Specification} may not be an arbitrary number  in $[0, M(\varepsilon)]$ while the $k$ can be an arbitrary number in $[0,M(\varepsilon)]$ in the specification property for Axiom A diffeomorphisms. This is due to the pseudo-orbits used in this paper to connect segments are periodic orbits. 
%This difference does not affect  the proof of Theorem \ref{distribution of periodic measures}. 

 \begin{Def}\label{Specification2}Let $f$ be a diffeomorphism on a compact manifold $M$  with  $CH^1(p)$. Let $-\lambda$ be the largest negative  Lyapunov exponent of $p$. The following property is called
{\it a hyperbolic closing property} : for  any $0<a<1$ and any $\varepsilon>0$  small enough, there
exists $M(\varepsilon)\in\mathbb{N}$ such that for any two points in the same orbit $x, f^n(x)\in CH^1(p)$,  there exists a hyperbolic periodic point $z\in CH^1(p)$ with period $P(z)$ such that 
\begin{enumerate}
\item  $d(f^i(z), f^i(x))\leq \varepsilon, \forall\, 0\leq i\leq n$ and  $\frac{n}{1-a}\leq P(z)\leq \frac{n}{1-a}+2M(\varepsilon),$
 \item the Lyapunov exponent along the central direction $E^c$ of periodic point $z$ is smaller than $-\frac{a\lambda}{4}.$
\end{enumerate}
\end{Def}
hyperbolic closing property means that we can find hyperbolic periodic points to shadow any segments.  For the sake of controlling  the Lyapunov exponents of the shadowing periodic points, we need some sources of hyperbolicity. We borrow some hyperbolicity from the hyperbolic periodic point $p$ of $CH^1(p)$.  The constant $a$ presents the proportion of hyperbolic periodic points in the periodic pseudo orbit. 
 \begin{Def}Let $f$ be a diffeomorphism on a compact manifold $M$ and let $K$ be  an compact invariant subset of $M$. Assume $E$ is an invariant one-dimensional subbundle of the tangent bundle $T_K(M)$.   We say that $f$ preserves an orientation of $E$ if there exists a continuous section of the bundle of orientations of $E$ over $K$, that is invariant by the action of $f$. In this case, the bundle $E$ can be identified to the trivial bundle $K\times \mathbb{R}$.
\end{Def}
\begin{Thm}\label{specification} Let $f$ be a diffeomorphism on a compact manifold $M$  with   $CH^1(p)$  .
If $f$ preserves the orientation of the subbundle $E^c$, both the closing property  and the hyperbolic closing property hold for $f|_{CH^1(p)}.$
\end{Thm}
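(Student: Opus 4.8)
The plan is to prove both properties by first establishing a pseudo-orbit shadowing mechanism adapted to the chain hyperbolic structure, and then, for the hyperbolic closing property, carefully tracking the center exponent by splicing in a long segment of the orbit of $p$.

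\smallskip

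\emph{Step 1: Local product structure from thin trapped plaques.} First I would use the thin trapped property of $\mathcal{W}^{cs}$ together with the fact that $E^u$ is uniformly expanding (so $\mathcal{W}^{cu}$ behaves like a genuine uniformly hyperbolic unstable plaque family) to obtain, for every small $\varepsilon>0$, a uniform size $\delta=\delta(\varepsilon)>0$ such that for any $x,y\in CH^1(p)$ with $d(x,y)<\delta$, the plaque $\mathcal{W}^{cs}_{loc}(x)$ of size $\varepsilon$ meets the plaque $\mathcal{W}^{cu}_{loc}(y)$ of size $\varepsilon$ in exactly one point, depending continuously on $(x,y)$. The orientation hypothesis on $E^c$ is what makes the one-dimensional center behave coherently under this intersection (it rules out the center plaque "folding back"), so that forward iteration contracts the $cs$-plaques into themselves (trapping) and the $cu$-plaques are eaten by $f^{-1}$. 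This is the analogue of the local product structure of a basic set.

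\smallskip

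\emph{Step 2: Closing property via a graph-transform / Brouwer fixed point argument.} Given segments $x,f^n(x)\in K$ and $y,f^m(y)\in K$, I would build a periodic $\varepsilon$-pseudo-orbit of length $n+m+k$ by first following $x,\dots,f^n(x)$, then jumping (within distance $\delta$) to $y$ — here is where the homoclinic relation of $q_s,q_u$ to $p$, hence the density of transverse intersections in $H(p)$, lets us connect $f^n(x)$ to $y$ along a heteroclinic path of bounded length $k\le M(\varepsilon)$ — following $y,\dots,f^m(y)$, and jumping back to $x$. Then I shadow this pseudo-orbit: iterate the map on the tube of $cs$-plaques of size $\varepsilon$, use the trapping to get forward invariance, use the $cu$-plaques and expansion to get a uniform cone/Lipschitz graph condition, and apply a fixed point theorem (Brouwer on the one-dimensional center times the hyperbolic contraction/expansion estimate on $E^s\oplus E^u$) to produce the periodic point $z$ with $P(z)$ in the stated range. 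The "spare segment" in Definition \ref{Specification} is exactly the heteroclinic bridge of length $k$ that cannot in general be removed.

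\smallskip

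\emph{Step 3: Hyperbolic closing — importing hyperbolicity from $p$.} For Definition \ref{Specification2}, given $x,f^n(x)\in CH^1(p)$, I would build the periodic pseudo-orbit by concatenating the segment $x,\dots,f^n(x)$ (of length $n$) with a long segment that stays near the orbit of $p$ of length $\ell\approx \frac{na}{1-a}$, connected at both ends by bounded heteroclinic bridges (using again that $q_s,q_u$, hence $p$, is homoclinically related to everything in $H(p)$). The total period is then $P(z)\approx n+\ell=\frac{n}{1-a}$, up to the additive $2M(\varepsilon)$. Applying Step 2's shadowing to this pseudo-orbit gives a periodic point $z$; the point is that along the $\ell$ iterates near $p$, the center derivative contracts at rate close to $e^{-\lambda}$ (since $-\lambda$ is the center exponent of $p$), while along the $n$ iterates near $x$ the center derivative is at worst multiplied by a bounded factor $e^{Cn}$ coming from the domination constant and the diameter of $CH^1(p)$. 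Choosing $\varepsilon$ small and $\ell/n$ of order $a/(1-a)$, a direct estimate of the product $\|Df^{P(z)}|_{E^c(z)}\|$ over one period gives center exponent $\le -\frac{a\lambda}{4}$ for all small $a$; in particular $z$ is hyperbolic (its $E^c$ is absorbed into the stable bundle), and $z\in CH^1(p)$ because it shadows points of $CH^1(p)$ which is locally maximal enough inside its own homoclinic class by chain hyperbolicity. The "more than two segments" version of the closing property follows by iterating the bridge construction.

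\smallskip

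\emph{Main obstacle.} The delicate point is Step 3's bookkeeping: one must control the center exponent \emph{along the actual periodic orbit $z$}, not along the pseudo-orbit, so the shadowing in Step 2 has to come with a $\mathcal{C}^1$ (not merely $\mathcal{C}^0$) estimate — the derivative of $f$ along $z$ must be uniformly close to the derivative along the pseudo-orbit. Getting this $\mathcal{C}^1$-closeness uniformly in $n$, while only assuming $f\in\mathcal{C}^1$ and a dominated (not uniformly hyperbolic) splitting on $E^{cs}$, is where the thin trapped hypothesis is essential: it provides nested trapped plaques of arbitrarily small diameter, so the shadowing orbit lies in an arbitrarily thin tube and the center derivatives along $z$ and along the pseudo-orbit differ by a factor as close to $1$ as desired per step. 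Quantifying this — choosing the plaque size $\varepsilon$ small enough as a function of the target exponent $\frac{a\lambda}{4}$, uniformly over all $n$ — is the technical heart of the proof.
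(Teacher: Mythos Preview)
Your overall architecture matches the paper's: build a periodic pseudo-orbit (with heteroclinic bridges through a finite $\delta$-dense set of periodic points from Lemma~\ref{dense good periodic points}), shadow it to get a point in $CH^1(p)$, extract a genuine periodic point, and for the hyperbolic version splice in a segment of length $\approx \tfrac{an}{1-a}$ near $p$. Two points, however, deserve correction.

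\textbf{The periodic point extraction.} Your Step~2 invokes a Brouwer/graph-transform argument, but the paper does something more specific that you should be aware of. The shadowing construction first produces a (non-periodic) point $\tilde z$ that $\varepsilon$-shadows the one-sided infinite periodic pseudo-orbit; uniform expansion of $E^u$ then forces all iterates $g^k(\tilde z)$ (where $g=f^{P}$) to lie in $\mathcal{W}^{cs}_{2\varepsilon}(\tilde z)$. Projecting along the $E^s$-foliation to the one-dimensional center plaque, orientation preservation of $E^c$ makes the projected sequence $\pi\circ g^k(\tilde z)$ monotone in the interval $\mathcal{W}^c_{2\varepsilon}(\tilde z)$, hence convergent to a fixed point $z$ of $g$. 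This is where the orientation hypothesis is actually used --- not to rule out ``folding'' of plaques as you suggest, but to force monotonicity of a one-dimensional sequence. A Brouwer argument as you sketch it would need $g$ to map a fixed plaque into itself, which is not what trapped gives (it gives $f(\mathcal{W}^{cs}(x))\subset\mathcal{W}^{cs}(f(x))$, a different plaque).

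\textbf{The center exponent estimate --- this is the real gap.} In Step~3 you write that along the $n$ iterates near $x$ the center derivative is ``at worst multiplied by a bounded factor $e^{Cn}$ coming from the domination constant''. With a fixed $C>0$ (and there is no reason domination alone gives a small $C$), your arithmetic does not close: to get exponent $\le -\tfrac{a\lambda}{4}$ over a period $\approx \tfrac{n}{1-a}$ you would need $C\le \tfrac{3a\lambda}{4(1-a)}$, which fails for small $a$. The paper's key observation (its Claim~3) is that the \emph{thin trapped} property forces every center Lyapunov exponent on $CH^1(p)$ to be nonpositive, so for any $\mu>0$ there exists $N$ with $\|Df^n_x|_{E^c}\|\le e^{\mu n}$ for all $n\ge N$ and all $x\in CH^1(p)$. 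Choosing $\mu<\tfrac{a\lambda}{4(1-a)}$ is what makes the estimate go through. Your ``main obstacle'' paragraph misdiagnoses the issue: the $\mathcal{C}^1$-closeness of the true orbit to the pseudo-orbit is routine (continuity of $Df$ plus $\varepsilon$-shadowing, with $\varepsilon$ small); the genuinely nontrivial input from thin trapping is this subexponential bound on $\|Df^n|_{E^c}\|$ along \emph{arbitrary} orbits, without which the $p$-segment of proportion $a$ cannot dominate.
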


One may have notice that there are some  shadowing lemmas for partially hyperbolic systems (such as central shadowing lemma, quasi-shadowing lemma in \cite{KT,HZZ}).  Crovisier \cite{C1} also gave a proof of shadowing
property for partially
 hyperbolic systems (with special trapped central model). 
 In his result, the shadowing points lie in the neighborhood of the targeted set, instead of in the  targeted set.   These shadowing lemmas are not suitable for our purposes.  
  In this paper, we always want 
 the shadowing points lie in $CH^1(p)$.   
   %It is worth to note that for $C^1$-generic diffeomorphisms, a chain-recurrence class  containing a hyperbolic periodic point $p$ coincides with the homoclinic class
% $CH^1(p)$. Thus, one can deduce that for $C^1$-generic diffeomorphisms, the shadowing points in Crovisier's
% paper \cite{C1} lying in  $CH^1(p)$.  So,  for $C^1$-generic diffeomorphisms, specification property holds. Our specification property  holds even for the case when
% $CH^1(p)$ is strictly contained in some chain-recurrence classes.  Thus, it is
% not a direct corollary from \cite{C1}. 
% We shall mention here that homoclinic classes may not equal to the corresponding chain-recurrence classes.
 %One can imagine a partially hyperbolic horseshoe derived from a first
 %homoclinic tangency: the horeseshoe is a homoclinic class (provided there is no transverse homoclinic intersection outside the
% horseshoe) whereas the orbit of homoclinic tangency is contained in
 %the chain-recurrence class. 
 
 In section 5, we  state a corollary from Theorem \ref{specification} about the density of hyperbolic periodic measures  in the space of invariant measures.  We also give a construction of a maximal entropy measure.

\smallskip
 
{\it Notation:} Throughout this paper,  $W^{*}$ means the real stable manifolds or the real unstable manifolds, contrasting to the notation of plaques, $\mathcal{W}^{*}.$

\section{Preliminaries}
As described in the introduction, chain hyperbolic homoclinic classes have some properties parallel to the properties of Axiom A systems. 
In this section, we  introduce  some properties of chain hyperbolic homoclinic classes (following \cite{C2}). 
Let $H(p)$ be a chain hyperbolic homoclinic class in Definition \ref{chain}. Then, $H(p)$ has a dominated splitting $T_{H(p)}M=E^{cs}\oplus E^{cu}$.   The following lemma says that $H(p)$ contains a dense set of hyperbolic
periodic points with long stable and unstable manifolds. 
\begin{Lem}[Lemma 3.2 in \cite{C2}]\label{dense good periodic points}
For any $\delta>0$ small, there exists a dense set
$\mathcal{P}\subseteq H(p)$ of hyperbolic periodic points homoclinically
related to the orbit of $p$ with the following property.
\begin{itemize}
\item The modulus of the Lyapunov exponents of any point $q\in
\mathcal{P}$ are larger than $\delta$;
\item The plaques $\mathcal{W}^{cs}(q)$ and $\mathcal{W}^{cu}(q)$
for any point $q\in \mathcal{P}$ are  contained in the
stable manifolds and the unstable manifolds of $q$ respectively. 
\end{itemize}
\end{Lem}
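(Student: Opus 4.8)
The plan is to observe first that the \emph{density} of $\mathcal{P}$ costs nothing: $H(p)$ is by definition the closure of the set of hyperbolic periodic points homoclinically related to $\orb(p)$, so given $x\in H(p)$ and $\varepsilon>0$ it suffices to manufacture a single periodic point $q$, homoclinically related to $\orb(p)$, with $d(q,x)\le 2\varepsilon$ and satisfying (1) and (2). Two remarks set up the construction. First, homoclinic relation is an equivalence relation on the hyperbolic periodic points homoclinically related to $\orb(p)$ (transitivity via the inclination lemma), so each such point --- in particular the $q_0$ chosen below --- is homoclinically related to both $q_s$ and $q_u$. Second, a dimension count pins the index: $\dim\mathcal{W}^{cs}(q_s)=\dim E^{cs}$ and $\mathcal{W}^{cs}(q_s)\subseteq W^s(q_s)$ give $\dim W^s(q_s)\ge\dim E^{cs}$, symmetrically $\dim W^u(q_u)\ge\dim E^{cu}$, and since $\dim E^{cs}+\dim E^{cu}=\dim M$ while $q_s$ is homoclinically related to $q_u$, all these are equalities; whence every hyperbolic periodic point homoclinically related to $\orb(p)$ has stable dimension $\dim E^{cs}$, the $E^{cs}$-exponents of $q_s$ and of $q_u$ are all negative, and their $E^{cu}$-exponents are all positive. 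Consequently, on any transitive hyperbolic set $\Lambda\subseteq H(p)$ whose periodic points have index $\dim E^{cs}$, uniqueness of dominated splittings forces $E^{cs}|_{\Lambda}=E^s_{\Lambda}$ and $E^{cu}|_{\Lambda}=E^u_{\Lambda}$, so $E^{cs}$ is uniformly contracted and $E^{cu}$ uniformly expanded along $\Lambda$.

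Now fix $x\in H(p)$, $\varepsilon>0$ and $\delta>0$; since conclusion (1) only strengthens as $\delta$ shrinks, I may assume $\delta$ is below an explicit threshold depending only on $q_s$ and $q_u$. Pick a hyperbolic periodic point $q_0$ homoclinically related to $\orb(p)$ with $d(q_0,x)<\varepsilon$. Applying Smale's theorem on transverse homoclinic intersections to the chain $q_0\sim q_s\sim q_u$, these three orbits together with finitely many transverse connecting orbits lie in a transitive hyperbolic (basic) set $\Lambda\subseteq H(p)$. For every large, suitably divisible $N$ I select inside $\Lambda$ a periodic point $q=q_N$ whose orbit successively $\varepsilon$-shadows: one turn of $\orb(q_0)$, then $N$ turns of $\orb(q_s)$, a bounded transition, $N$ turns of $\orb(q_u)$, and a bounded transition closing up; such $q_N$ exists because $\Lambda$ is a basic set, and $d(q_N,x)\le 2\varepsilon$ with $q_N$ homoclinically related to $\orb(p)$. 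Let $\mathcal{P}$ be the set of all such $q_N$ (over $x\in H(p)$, $\varepsilon\downarrow 0$, and $N$ large); it is dense in $H(p)$.

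For (1): write the period of $q_N$ as $\pi_N=N\pi_s+N\pi_u+T_0$ with $T_0$ bounded. Decomposing $Df^{\pi_N}|_{E^{cs}(q_N)}$ as a product along the orbit, bounding each factor by submultiplicativity of the operator norm, and using continuity of $y\mapsto\|Df^{m\pi_s}|_{E^{cs}(y)}\|$ together with $\tfrac1{m\pi_s}\log\|Df^{m\pi_s}|_{E^{cs}(q_s)}\|\to-\lambda_s<0$ as $m\to\infty$ (and the analogue at $q_u$, whose $E^{cs}$-exponents are also negative by the index count), one obtains $\tfrac1{\pi_N}\log\|Df^{\pi_N}|_{E^{cs}(q_N)}\|\to-\beta_{cs}<0$ as $N\to\infty$, with $\beta_{cs}$ depending only on $q_s,q_u$. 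As every $E^{cs}$-exponent of $q_N$ is $\le$ this top one, all of them have modulus $>\delta$ once $N$ is large. The mirror estimate for $f^{-1}$ and $E^{cu}$ makes every $E^{cu}$-exponent of $q_N$ of modulus $>\delta$. So $q_N$ is uniformly hyperbolic of index $\dim E^{cs}$ with all Lyapunov exponents of modulus $>\delta$.

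For (2): by the previous step $q_N$ is genuinely hyperbolic of index $\dim E^{cs}$, so $W^s(q_N)$ contains a disk through $q_N$ tangent to $E^{cs}(q_N)=E^s(q_N)$, and near $q_N$ the map $f^{\pi_N}$ on $\mathcal{W}^{cs}(q_N)$ is $\mathcal{C}^1$-close to the (eventually contracting) linear map $Df^{\pi_N}|_{E^{cs}(q_N)}$; combined with the trapping $f^{\pi_N}(\overline{\mathcal{W}^{cs}(q_N)})\subseteq\mathcal{W}^{cs}(q_N)$ this puts a neighbourhood of $q_N$ in $\mathcal{W}^{cs}(q_N)$ inside $W^s(q_N)$. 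To get the whole plaque, let $K=\bigcap_{m\ge 0}f^{m\pi_N}(\overline{\mathcal{W}^{cs}(q_N)})$ be the maximal $f^{\pi_N}$-invariant subset of the plaque; trapping gives $f^{m\pi_N}(z)\to K$ for all $z\in\mathcal{W}^{cs}(q_N)$, so it is enough to prove $K=\{q_N\}$. Points of $K$ have full $f^{\pi_N}$-orbit in the compact plaque, so $\bigcup_{0\le i<\pi_N}f^i(K)$ is a compact invariant set carrying the dominated splitting and sitting inside the locally invariant, $f$-trapped $cs$-plaques; since the $cu$-plaques are $f^{-1}$-trapped and transverse to them while the dynamics along $\orb(q_N)\subseteq\Lambda$ is uniformly hyperbolic, this forces $K=\orb(q_N)\cap\overline{\mathcal{W}^{cs}(q_N)}=\{q_N\}$, hence $\mathcal{W}^{cs}(q_N)\subseteq W^s(q_N)$; the symmetric argument with $f^{-1}$, the $cu$-plaques and $W^u$ yields $\mathcal{W}^{cu}(q_N)\subseteq W^u(q_N)$. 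I expect this last reduction, $K=\{q_N\}$, to be the main obstacle: everything else is routine once the index is pinned and the horseshoe is in hand, but controlling the a priori large trapped plaque demands playing both plaque families against the uniform hyperbolicity of $\Lambda$. (In the $CH^1(p)$ setting of the paper, thin trappedness of $\mathcal{W}^{cs}$ gives an extra lever --- one may take the plaque of $q_N$ of arbitrarily small diameter, so that it lies in a stable cone field over $\Lambda$ and the inclusion becomes immediate.)
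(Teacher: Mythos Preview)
The paper does not prove this lemma at all; it is quoted from Crovisier--Pujals \cite{C2} and used as a black box, so there is no in-paper argument to compare against. Your overall strategy --- pin the index via $q_s,q_u$, build a transitive hyperbolic set $\Lambda\subset H(p)$ containing $q_0,q_s,q_u$, and take periodic points $q_N\in\Lambda$ that spend most of their period shadowing $\orb(q_s)$ and $\orb(q_u)$ --- is the natural one and is essentially what is done in \cite{C2}. The density reduction and the exponent estimate for item~(1) are fine.

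The gap is precisely where you flag it, and your justification does not close it. You write that $\bigcup_i f^i(K)$ ``carries the dominated splitting'' and then invoke the uniform hyperbolicity of $\Lambda$; but $K$ lives in the $cs$-plaque, which is an embedded disk in $M$ meeting $H(p)$ a priori only at its base point, so neither the dominated splitting on $H(p)$ nor the hyperbolic structure of $\Lambda$ acts on $K$. The characterisation $W^s_\varepsilon(y)=\{z:d(f^nz,f^ny)\le\varepsilon\ \forall n\ge0\}$ for $y\in\Lambda$ would finish the argument instantly \emph{if} the plaque diameter were below the expansivity scale of $\Lambda$ --- but $\Lambda$ is manufactured after the plaque family is fixed, so you have no such bound. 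What actually drives the inclusion in the general chain-hyperbolic case is the hypothesis $\mathcal{W}^{cs}(q_s)\subset W^s(q_s)$ itself: by trapping and $\mathcal C^1$-continuity of the plaque family, during the $N$ turns of $q_N$ near $\orb(q_s)$ the trapped images $f^j(\mathcal{W}^{cs}(q_N))$ are $\mathcal C^1$-close to iterates of $\mathcal{W}^{cs}(q_s)$, which genuinely shrink to a point; hence the $cs$-plaque of $q_N$ is squeezed to arbitrarily small diameter there, and the remaining bounded transitions (plus the $q_u$-passage, along which $E^{cs}$ is also eventually contracted by the index count) cannot undo this for $N$ large. Your thin-trapped remark is a valid shortcut in the $CH^1(p)$ setting of the present paper, and that is all the paper needs; but for the lemma as stated you must run the contraction through $q_s$ rather than appeal to $\Lambda$.
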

For any hyperbolic periodic point $q\in\mathcal{P}$, we can denote $\mathcal{W}^{cs}(q)$ by $W^s(q)$ and denote $\mathcal{W}^{cu}(p)$ by $W^u(q)$. 
We can see from the  next lemma that chain-hyperbolic
homoclinic classes have  local product structures. This property is
crucial in the proof of shadowing property, both in the uniform case
and  this special partially hyperbolic case. It  motivated us the construction in Section 3.\begin{Lem}[Lemma 3.4 in \cite{C2}]\label{local product}  For any $\varepsilon>0$, there exists $\delta >0$ such that for any $x,y\in H(p)$, if $d(x,y)<\delta$, then $\mathcal{W}_{\varepsilon}^{cs}(x)$
and $\mathcal{W}_{\varepsilon}^{cu}(y)$ transversally intersect at a single point
$z\in H(p)$, where $\mathcal{W}_{\varepsilon}^{cs}(x)\subset \mathcal{W}^{cs}(x)$ is centered at $x$ with length $2\varepsilon$.  
\end{Lem}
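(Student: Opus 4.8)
The plan is to separate the statement into a purely local, differential–topological assertion — that the two plaques, viewed as embedded disks in $M$, meet at exactly one point and transversally — and a dynamical assertion — that this intersection point belongs to $H(p)$. The first is routine; the second is the real content, and I would not try to extract it from the trapping of the plaques (which only confines the orbit of the intersection point to a fixed neighbourhood of $H(p)$, not to $H(p)$ itself).

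For the local part I would fix $\varepsilon>0$ small and work in the exponential chart at $x$. Since $T_{H(p)}M=E^{cs}\oplus E^{cu}$ is a dominated splitting over the compact set $H(p)$, the two bundles are complementary with angle bounded below by some $\theta_0>0$ uniformly, and $\dim E^{cs}+\dim E^{cu}=\dim M$. Because $(\mathcal{W}^{cs}_x)_x$ and $(\mathcal{W}^{cu}_x)_x$ are continuous families of $C^1$-embeddings, each tangent to $E^{cs}$, resp. $E^{cu}$, at the base point, one reads $\mathcal{W}^{cs}_\varepsilon(x)$ in this chart as the graph of a $C^1$ map $E^{cs}_x\to E^{cu}_x$ vanishing to first order at $0$, with $C^1$-norm tending to $0$ as $\varepsilon\to0$ uniformly in $x$, and, for $d(x,y)<\delta$, $\mathcal{W}^{cu}_\varepsilon(y)$ as a similar graph $E^{cu}_x\to E^{cs}_x$ based within distance $\delta$ of $0$. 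A standard graph–transform / implicit function argument then produces $\delta>0$, uniform in $x,y\in H(p)$, such that for $d(x,y)<\delta$ the two graphs meet at a single transverse point $z=z(x,y)\in B(x,C\varepsilon)$, depending continuously on $(x,y)$.

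To get $z(x,y)\in H(p)$ I would approximate $x,y$ by good periodic points. Fixing the parameter in Lemma~\ref{dense good periodic points}, let $\mathcal{P}\subseteq H(p)$ be the resulting dense set of hyperbolic periodic points, for which $\mathcal{W}^{cs}(q)\subseteq W^s(q)$ and $\mathcal{W}^{cu}(q)\subseteq W^u(q)$. Choose $q_n\to x$ and $q'_n\to y$ in $\mathcal{P}$. A dimension count — $\mathcal{W}^{cs}(q_n)$ is a disk of dimension $\dim E^{cs}$ inside $W^s(q_n)$, $\mathcal{W}^{cu}(q'_n)$ a disk of dimension $\dim E^{cu}$ inside $W^u(q'_n)$, and the two dimensions add up to $\dim M$ — forces $\dim W^s(q_n)=\dim E^{cs}$ and $\dim W^u(q'_n)=\dim E^{cu}$, so $\mathcal{W}^{cs}_\varepsilon(q_n)$ and $\mathcal{W}^{cu}_\varepsilon(q'_n)$ are open pieces of $W^s(q_n)$ and $W^u(q'_n)$. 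For $n$ large $d(q_n,q'_n)<\delta$, so by the local part they meet at a single transverse point $z_n$, which is therefore a transverse intersection point of $W^s(q_n)$ with $W^u(q'_n)$. Since $q_n$ and $q'_n$ are both homoclinically related to the orbit of $p$, they are homoclinically related to each other, and the $\lambda$-lemma then exhibits $z_n$ as a $C^1$-limit of transverse homoclinic points of $q_n$; all of these lie in $H(q_n)=H(p)$, so $z_n\in H(p)$. Letting $n\to\infty$ and using the continuity of $(x',y')\mapsto z(x',y')$ from the local part, $z_n\to z(x,y)=\mathcal{W}^{cs}_\varepsilon(x)\cap\mathcal{W}^{cu}_\varepsilon(y)$, and closedness of $H(p)$ gives $z(x,y)\in H(p)$.

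The one real obstacle is this last inclusion $z\in H(p)$; the uniqueness and transversality of the intersection are standard once the uniform lower bound on the angle between $E^{cs}$ and $E^{cu}$ and the equicontinuity of the plaque families on the compact set $H(p)$ are used to make the graph estimate uniform in the base points. What makes the inclusion go through is precisely the density of the ``good'' periodic points of Lemma~\ref{dense good periodic points}, whose plaques are genuine pieces of stable and unstable manifolds, combined with the classical fact that a transverse heteroclinic point of two homoclinically related hyperbolic periodic orbits lies in their common homoclinic class.
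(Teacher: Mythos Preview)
The paper does not give its own proof of this lemma: it is simply quoted as Lemma~3.4 of Crovisier--Pujals~\cite{C2}, so there is no argument here to compare yours against. Your proof is correct and is in fact the standard one (and essentially that of~\cite{C2}): a uniform graph/implicit-function argument over the compact set $H(p)$ gives the unique transverse intersection, and membership in $H(p)$ comes from approximating $x,y$ by points of $\mathcal{P}$ via Lemma~\ref{dense good periodic points}, whose plaques are genuine local stable and unstable manifolds, together with closedness of $H(p)$. Two minor cosmetic points: the dimension count should be run for a single $q\in\mathcal{P}$ (for which both inclusions $\mathcal{W}^{cs}(q)\subset W^s(q)$ and $\mathcal{W}^{cu}(q)\subset W^u(q)$ hold), rather than mixing $q_n$ and $q'_n$; and once $q_n$ and $q'_n$ are homoclinically related to $p$, the transverse heteroclinic point $z_n\in W^s(q_n)\pitchfork W^u(q'_n)$ lies in $H(p)$ directly from the definition of the homoclinic class, so the appeal to the $\lambda$-lemma is not needed.
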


\section{ The proof of Theorem \ref{specification}}
In this section we shall prove Theorem \ref{specification}.  
Let $f$ be a diffeomorphism on a compact manifold $M$  with a homoclinic class  $CH^1(p)$.
Assume that $f$ preserves the orientation of the subbundle $E^c$.  
Before heading to the main block of the proof, we clarify some notations and identify some constants along the way. 

From Definition \ref{CH^1(p)}, there is a splitting $$T_{CH^1(p)}M=E^{cs}\oplus E^u=E^{s}\oplus E^c\oplus E^{u}.$$ Thus, there is a plaque family $\mathcal{W}^{cs}$ associated with subbundle $E^{cs}$.  As the bundle $E^u$ here is uniformly expanding, there are unstable manifolds 
$W^u$ associated with $E^u$.   There is also a plaque family $\mathcal{W}^c$ associated with the bundle $E^c$.  The length of the circle plaques along $E^c$ can not be arbitrarily small. Otherwise, the bundle $E^c$ at the accumulation point of the circles with radii going to zero can not be well defined.  Thus, we can assume the radius of the  plaque family $\mathcal{W}^c$ are so small  that  the plaques for $\mathcal{W}^c$ are intervals, instead of   circles.

 Let $\varepsilon>0$ be small enough.  According  to the local product property (Lemma \ref{local product}),  there exists
$\frac{\delta}{2}>0$, such that
$$\mathcal{W}^{cs}_{\varepsilon}(x)\cap
W^u_{\varepsilon}(y)\neq\emptyset,\,\,\text{ for any } x,y\in CH^1(p) \text{ satisfying } d(x,y)\leq\frac{\delta}{2}.$$
Applying Lemma \ref{local product} again, we obtain
$0<\delta_1<\frac{\delta}{2}$ such that
$$\mathcal{W}^{cs}_{\frac{\delta}{2}}(x)\cap
W^u_{\frac{\delta}{2}}(y)\neq\emptyset,\,\,\text{ for any } x,y\in CH^1(p)\text{ satisfying } d(x,y)\leq\delta_1.$$
Due to Lemma \ref{dense good periodic points},
for $\delta_1>0$ given above, there exists   a finite collection of hyperbolic periodic points  $$\mathcal{C}(\varepsilon)=\{p_i\}_{i=1}^{N}\subset \mathcal{P}$$ such that for any $x_1, x_2\in CH^1(p),$  there exists $\{y_1,y_2,\ldots, y_k\}\subset \mathcal{C}(\varepsilon)$ such that 
$d(x_1,y_1)\leq \frac{\delta_1}{2}, d(y_i,y_{i+1})\leq \frac{\delta_1}{2}, , \,\forall 1\leq i\leq k-1$ and $d(y_k, x_2)\leq \frac{\delta_1}{2}.$  

Let $\lambda_0$ be the uniform lower bound of the absolute values of Lyapunov exponents of the hyperbolic periodic points in $\mathcal{P}$. For any hyperbolic periodic point $q \in\mathcal{P}$, we can denote $\mathcal{W}^{cs}(q)$
by $W^s(q)$.    Thus for $\varepsilon>0$ small enough  and any point
$p_k\in\mathcal{C}(\varepsilon)$, there exist constant $C>1$ and integers $K(p_k)$ such that
$$d(f^{K(p_k)P(p_k)}(y_1),f^{K(p_k)P(p_k)}(y_2))\leq Ce^{-\lambda_0 K(p_k)P(p_k)}d(y_1,y_2),\ \ \ \forall y_1,y_2\in W^{s}_{\varepsilon}(p_k),$$
$$d(f^{K(p_k)P(p_k)}(y_1),f^{K(p_k)P(p_k)}(y_2))\geq C^{-1}e^{\lambda_0 K(p_k)P(p_k)}d(y_1,y_2),\ \ \ \forall y_1,y_2\in W^{u}_{\varepsilon}(p_k),$$
and $$Ce^{-\lambda_0K(p_k)P(p_k)}\varepsilon\leq \frac{\delta_1}{2}\leq \frac{\delta}{2},$$
where $P(p_k)$ is the period of $p_k.$
Denote $$M(\varepsilon)=\sum_{i=0}^NK(p_i)P(p_i).$$
 By the definition of $K(\cdot)$ and $P(\cdot)$, we can see that $M(\varepsilon)$ only depends on $\varepsilon$. 
 
\begin{proof}[Proof of the closing property in Theorem \ref{specification}] We shall construct a sequence of points that are the intersection points of some unstable manifolds and $\mathcal{W}^{cs} $ plaques.  Fix  two points  in the same orbit  $x, f^n(x)\in CH^1(p)$ and two points in the same orbit $y,f^m(y)\in CH^1(p)$ arbitrarily. From the pick of $\mathcal{C}(\varepsilon)$, 
there is a subset $\{y_1,y_2,y_3,\ldots,y_{N_1}\}\subset\mathcal{C}(\varepsilon)$ satisfying the
following properties,
$$d(f^n(x),y_1)\leq\frac{\delta_1}{2},\,\,d(y_i,y_{i+1})\leq\frac{\delta_1}{2},\text{ and } d(y_{N_1},y)\leq\frac{\delta_1}{2}, \,\,\forall 1\leq i\leq N_1-1.$$
Similarly, there is a subset $\{y_1',y_2',y_3',\ldots,y_{N_2}'\}\subset\mathcal{C}(\varepsilon)$ satisfying the
following properties,
$$d(f^m(y),y_1')\leq\frac{\delta_1}{2},\,\,d(y_i',y_{i+1}')\leq\frac{\delta_1}{2},\text{ and } d(y_{N_2}',x)\leq\frac{\delta_1}{2}, \,\,\forall 1\leq i\leq N_2-1.$$

Then,  
$$W^{u}_{\delta}(f^n(x))\cap W^s_{\delta}(y_1)\neq\emptyset,\,\,W^u_{\delta}(y_i)\cap W^s_{\delta}(y_{i+1})\neq\emptyset,$$ $$\text{ and }
W^u_{\delta}(y_{N_1})\cap
\mathcal{W}^{cs}_{\delta}(y))\neq\emptyset,\,\,\forall 1\leq i\leq N_1-1.$$ 
Similarly, 
$$W^{u}_{\delta}(f^m(y))\cap W^s_{\delta}(y_1')\neq\emptyset,\,\,W^u_{\delta}(y_i')\cap W^s_{\delta}(y_{i+1}')\neq\emptyset,$$ $$\text{ and }
W^u_{\delta}(y_{N_2}')\cap
\mathcal{W}^{cs}_{\delta}(x))\neq\emptyset,\,\,\forall 1\leq i\leq N_2-1.$$ 

Let $M_1(\varepsilon)=\sum_{k=1}^{N_1}K(y_k)P(y_k)$ and $M_2(\varepsilon)=\sum_{k=1}^{N_2}K(y_k')P(y_k')$. We have $\max\{M_1(\varepsilon),M_2(\varepsilon)\}\leq M(\varepsilon). $
Now we  construct an one-sided periodic pseudo orbit $\mathcal{O}$ as follows,
\begin{eqnarray*}
&&\left \{ x, f(x), \ldots,f^{n-1}(x),\right. \\[2mm]
&&\,\,\,\underbrace{y_1,f(y_1),\ldots,f^{P(y_1)-1}(y_1)}_{K(y_1)},\\[2mm]
&&\,\,\,\,\,\,\,\,\,\,\,\,\,\,\,\,\,\,\,\,\,\,\,\,\,\,\,\cdots,\\[2mm]
&&\,\,\,\underbrace{y_{N_1},f(y_{N_1}),\ldots,f^{P(y_{N_1})-1}(y_{N_1})}_{K(y_{N_1})},\\
&&\,\,\, y, f(y), \ldots,f^{m-1}(y),\\[2mm]
&&\,\,\,\underbrace{y_1',f(y_1'),\ldots,f^{P(y_1')-1}(y_1')}_{K(y_1')},\\[2mm]
&&\,\,\,\,\,\,\,\,\,\,\,\,\,\,\,\,\,\,\,\,\,\,\,\,\,\,\,\cdots,\\[2mm]
&&\,\,\,\underbrace{y_{N_2}',f(y_{N_2}'),\ldots,f^{P(y_{N_2}')-1}(y_{N_2}')}_{K(y_{N_2}')},\\
&& x, f(x), \ldots,f^{n-1}(x), \\[2mm]
&&\big.\,\,\,\,\,\,\,\,\,\,\,\,\,\,\,\,\,\,\,\,\,\,\,\,\,\,\,\,\,\cdots\Big\}.
\end{eqnarray*}
{\it Claim 1:} There is a point $\tilde{z}\in CH^1(p)$ $\varepsilon$-shadowing $\mathcal{O}$. 
\smallskip

{\bf Step 1:} Construct $z_0,z_1,\ldots,z_{N_1}.$
Since $d(f^n(x), y_1)\leq \frac{\delta}{2}$, we obtain $z_{0}\in W^{u}_{\varepsilon}(f^n(x))\cap W^{s}_{\varepsilon}(y_1).$
 Combining $d(f^{K(y_1)P(y_{1})}(z_{0}),y_{1})<Ce^{-\lambda_0
K(y_1)P(y_1)}\varepsilon<\frac{\delta}{2}$ and $d(y_1,y_2)<\frac{\delta}{2},$ we deduce
 $d(f^{K(y_1)P(y_1)}(z_{0}), y_{2})<\delta.$ Thus we  obtain $$z_{1}\in W^{u}_{\varepsilon}(f^{K(y_1)P(y_1)}(z_{0}))\cap
W^{s}_{\varepsilon}(y_2).$$ Preceding in this fashion, we obtain $$z_{k}\in W^{u}_{\varepsilon}(f^{K(y_k)P(y_k)}(z_{k-1}))\cap
W^{s}_{\varepsilon}(y_{k+1}),$$ for any $1\leq k\leq N_1-1$.  
Then $d(f^{K(y_{N_1})P(y_{N_1})}(z_{N_1-1}), y_{N_1})\leq C e^{-\lambda
K(y_{N_1})P(y_{N_1})}\varepsilon<\frac{\delta_1}{2}$.   Observing also that $d(y_{N_1},y)\leq \frac{\delta_1}{2}$, we have $$d(f^{K(y_{N_1})P(y_{N_1})}(z_{N_1-1}),y)\leq \delta_1.$$
Thus we obtain $$z_{N_1}\in W^u_{\frac{\delta}{2}}(f^{K(y_{N_1})P(y_{N_1})}(z_{N_1-1}))\cap
\mathcal{W}^{cs}_{\frac{\delta}{2}}(y).$$

{\bf Step 2:}  Construct $z_{N_1+1},\ldots,z_{N_1+m}.$
 It follows from the thin trapped property of plaques $\mathcal{W}^{cs}$ that $f^{m}(z_{N_1})\in \mathcal{W}^{cs}_{\frac{\delta}{2}}(f^m(y)).$ (Here we may need a little adjustment of the constant $\delta_1$. This is because the family $\{\phi_x\}$ in the definition of thin trapped, which indicates the radius of contracted region, may not be a constant function. But from the continuity of the family $\{\phi_x\}$ and the compactness of $CH^1(p)$, after a little adjustment of the constants $\delta_1$, we still can say that $f^{m}(z_{N_1})\in \mathcal{W}^{cs}_{\frac{\delta}{2}}(f^m(y))$.) 
 Since $d(f^m(y), y_1')\leq \frac{\delta}{2}$, we have 
 $$d(f^m(z_{N_1}), f^m(y))\leq \delta.$$  
 Thus we obtain $$z_{N_1+m} \in
W^{u}_{\varepsilon}(f^m(z_{N_1}))\cap W^{s}_{\varepsilon}(y_1').$$

{\bf Step 3:} Construct $z_{N_1+m+N_2}$. 
Applying the construction of step 1, we obtain 
$$z_{N_1+m+k}\in W^{u}_{\varepsilon}(f^{K(y_k')P(y_k')}(z_{N_1+m+k-1}))\cap
W^{s}_{\varepsilon}(y_{k+1}'),$$ for any $1\leq k\leq N_2$.

{\bf Step 4:} Construct $z_{N_1+m+N_2+n}$.
Applying the construction of step 2, we obtain 
$$z_{N_1+m+N_2+k}\in W^{u}_{\varepsilon}(f^{K(y_k)P(y_k)}(z_{N_1+m+N_2+k}))\cap
W^{s}_{\varepsilon}(y_{k+1}),$$ for any $1\leq k\leq n$.

{\bf Step 5:} Construct $z_{k(N_1+m+N_2+n)}$.
Repeating the above steps, we obtain 
$z_{k(N_1+m+N_2+n)}$.

Take an accumulation point $\tilde{z}$ of
$\{f^{-(k(N_1+m+N_2+n))}(z_{k(N_1+m+N_2+n)})\}_{k\in\mathbb{N}}$. 
From the choice of  $z_{k(N_1+m+N_2+n)}$, we deduce that $\tilde{z}$ is $\varepsilon$-shadowing the pseudo-orbit $\mathcal{O}$. 

\smallskip
{\it Claim 2:} There exists a periodic point $z\in CH^1(p)$ $\varepsilon$-shadowing $\mathcal{O}$.  

\smallskip
Denote $f^{N_1+m+N_2+n}$ by $g$.  By the uniformly expanding property of  $E^u$ bundle, we deduce that  $\{g^k(\tilde{z})\}_{k=1}^{\infty}\subset
\mathcal{W}^{cs}_{2\varepsilon}(\tilde{z})$. Let $z$ be an accumulation point for $\{g^{k}(\tilde{z})\}_{k=1}^{\infty}$.   
Due to the uniform contraction of $E^s$ bundle,  we have  $z\in\mathcal{W}^c_{2\varepsilon}(\tilde{z})$. 

It remains to show that $z$ is a periodic point for  $g$. Some arguments of one dimensional dynamics allow us to obtain this property. 
First of all, we define a projection along the $E^s$ direction. Since $E^s$ bundle is uniformly contracted, there exist well placed foliations tangent to $E^s$. Let $\pi: \mathcal{W}^{cs}(\tilde{z})\rightarrow \mathcal{W}^c(\tilde{z})$ be the projection map along the foliations tangent to $E^s.$  From the invariance of $E^s$, we have $\pi\circ g^2=g\circ \pi \circ g.$
Secondly, we give an orientation for the interval $\mathcal{W}^c_{2\varepsilon}(\tilde{z}).$ Without loss of generality, assume that we have interval $[\tilde{z}, g(\tilde{z}))].$  Then we have the interval $ \pi\circ g[\tilde{z}, \pi\circ g(\tilde{z})]=[\pi\circ g(\tilde{z}), \pi\circ g^2(\tilde{z})],$  or $ \pi\circ g[\tilde{z}, \pi\circ g(\tilde{z})]=[\pi\circ g^2(\tilde{z}), \pi\circ g(\tilde{z})].$  
 Since $f$ preserves the central bundle,  we are reduced to the case  $$ \pi\circ g[\tilde{z}, \pi\circ g(\tilde{z})]=[\pi\circ g(\tilde{z}), \pi\circ g^2(\tilde{z})].$$
  %If the length of the interval $l(\pi\circ g^k[\tilde{z}, \pi\circ g(\tilde{z}))])$ goes to zero when $k$ goes to $+\infty$,  then the accumulation point $z$ is a periodic point  with period $N_1+n.$   On the other hand, if $l(\pi\circ g^k[\tilde{z}, \pi\circ g(\tilde{z})])$ does not go to zero,  we take a sequence of integers $k_j$ such that $ \pi\circ g^{k_j}[\tilde{z}, \pi\circ g(\tilde{z}))]\rightarrow L=[a,g(a)]\subset \mathcal{W}^c_{2\varepsilon}(\tilde{z}).$  Then, $ \pi\circ g^{k_j+1}[\tilde{z}, \pi\circ g(\tilde{z}))]\rightarrow g(L)\subset\mathcal{W}^c_{2\varepsilon}(\tilde{z})$. 
  Combining the fact that $\mathcal{W}^c_{2\varepsilon}(\tilde{z})$ is an interval with bounded length and the assumption that $f$ preserves the orientation of $E^c$,  we have that $l(\pi\circ g^k[\tilde{z}, \pi\circ g(\tilde{z}))])$ goes to zero when $k$ goes to $+\infty$.  Thus $z$ is a fixed point for $g$.   
   On account of   the fact that $CH^1(p)$ is a closed and invariant set, we have $z\in CH^1(p).$ 
 We complete   the proof of the closing property.  

\end{proof}
\begin{figure}
\begin{center}
\setlength{\unitlength}{1.2mm}
 \begin{picture}(60,40)
 \linethickness{1pt}

\qbezier(0,20)(10,40)(30,30)\qbezier[20](0,20)(10,5)(30,10)\qbezier[20](30,30)(40,25)(30,10)
\cbezier[500](30,10)(40,10)(45,0)(30,10)\put(36,24){$\leq M(\varepsilon)$}\put(33,12){$p$}\put(38,10){$\frac{n(1-a)}{a}$}
\put(2,22){$x$}\put(32,32){$f^{n}(x)$}\put(10,5){$\leq M(\varepsilon)$}
\put(12,-3){Picture 1.}
\end{picture}
\end{center}
\end{figure}
\newpage
\begin{proof}[Proof of the hyperbolic closing property in Theorem \ref{specification}] Fix any two points in the same orbit $x,$ $f^n(x)$.  Without
loss of generality, we can assume  the hyperbolic point $p$ for $CH^1(p)$ to be a fixed point.  We will construct an one-sided periodic pseudo orbit, which goes through not only the segment $\{x,\ldots, f^n(x)\}$ but also the hyperbolic fixed point $p$. We will force the pseudo orbit to stay enough
 time at $p$ in order to get enough hyperbolicity for the corresponding shadowing periodic point.  See picture 1 for the segments and the corresponding periodic pseudo orbit.

Fix  two points in the same orbit $x, f^n(x)\in CH^1(p)$  arbitrarily. 
There is a subset $\{y_1,y_2,y_3,\ldots,y_{N_1}\}\subset\mathcal{C}(\varepsilon)$ satisfying the
following properties,
$$d(f^n(x),y_1)\leq\frac{\delta_1}{2},\,\,d(y_i,y_{i+1})\leq\frac{\delta_1}{2},\text{ and }d(y_{N_1},p)\leq\frac{\delta_1}{2}.$$
Then,
$$W^{u}_{\delta}(x)\cap W^s_{\delta}(y_1)\neq\emptyset,\,\,W^u_{\delta}(y_i)\cap W^s_{\delta}(y_{i+1})\neq\emptyset,\text{ and }
W^u_{\delta}(y_{N_1})\cap
\mathcal{W}^{cs}_{\delta}(f^n(x)))\neq\emptyset.$$ 
There is another subset $\{y_1',y_2',y_3',\ldots,y_{N_2}'\}=\mathcal{C}(\varepsilon)$ satisfying the
following properties,
$$d(p,y_1')\leq\frac{\delta_1}{2},\,\,d(y_i',y_{i+1}')\leq\frac{\delta_1}{2},\text{ and }d(y_{N_2}',x)\leq\frac{\delta_1}{2}.$$
Then,
$$W^{u}_{\delta}(p)\cap W^s_{\delta}(y_1')\neq\emptyset,\,\,W^u_{\delta}(y_i')\cap W^s_{\delta}(y_{i+1}')\neq\emptyset,\text{ and }
W^u_{\delta}(y_{N_2}')\cap
\mathcal{W}^{cs}_{\delta}(x))\neq\emptyset.$$ 
Let $M_1(\varepsilon)=\sum_{k=1}^{N_1}K(y_k)P(y_k)$  and $M_2(\varepsilon)=\sum_{k=1}^{N_2}K(y_k')P(y_k')$ . We have $M_1(\varepsilon), M_2(\varepsilon)\leq M(\varepsilon). $

Now we construct an one-sided periodic pseudo orbit $\mathcal{O}'$ as follows,
\begin{eqnarray*}
&& \Big\{ x, f(x), \ldots,f^{n}(x),\Big. \\[2mm]
&&\,\,\,\underbrace{y_1,f(y_1),\ldots,f^{P(y_1)-1}(y_1)}_{K(y_1)},\\[2mm]
&&\,\,\,\,\,\,\,\,\,\,\,\,\,\,\,\,\,\,\,\,\,\,\,\,\,\,\,\cdots,\\[2mm]
&&\,\,\,\underbrace{y_{N_1},f(y_{N_1}),\ldots,f^{P(y_{N_1})-1}(y_{N_1})}_{K(y_{N_1})},\\
&&\,\,\,\underbrace{ p, p, \ldots, p}_{\frac{an}{1-a}},\\[2mm]
&&\,\,\,\underbrace{y_1',f(y_1'),\ldots,f^{P(y_1')-1}(y_1')}_{K(y_1')},\\[2mm]
&&\,\,\,\,\,\,\,\,\,\,\,\,\,\,\,\,\,\,\,\,\,\,\,\,\,\,\,\cdots,\\
&&\,\,\,\underbrace{y_{N_2}',f(y_{N_2}'),\ldots,f^{P(y_{N_2}')-1}(y_{N_2}')}_{K(y_{N_2}')},\\
&&\,\,\,\,x, f(x), \ldots,f^{n}(x), \\[2mm]
&&\Big.\,\,\,\,\,\,\,\,\,\,\,\,\,\,\,\,\,\,\,\,\,\,\,\,\,\,\,\,\,\cdots\Big\}.
\end{eqnarray*}

Applying the same arguments in the proving of the closing lemma property, we deduce that there exists $q\in CH^1(p)$ with period $P(q)=\frac{n}{1-a}+k$, where $0\leq k\leq 2M(\varepsilon)$,  which is  $\varepsilon$-shadowing the pseudo orbit $\mathcal{O}'$. 

It remains to show that the Lyapunov exponent along the central direction of $q$ is smaller than $-\frac{a\lambda}{4}$.  In order to carry out  some useful estimates, we state a claim first.

{\it Claim 3:} 
For any number $\mu>0$, there exists integer $N>0$, such that for any $n\geq N$, we have 
$$\|Df^n_x|_{E^c}\|\leq e^{\mu n},\,\,\, \forall x\in CH^1(p).$$ 

 Suppose the claim does not hold, then there exists a number $\mu>0$ satisfying the following:  for any $k>0$ there is $n_k\geq k$ such that  there exists a point $x_k\in CH^1(p)$, such that 
$$\|Df^{n_k}_{x_k}|_{E^c}\|> e^{\mu n_k} .$$
Then, for the accumulation point $x$ of $x_k$, we have the Lyapunov exponent along the $E^c$ direction at point $x$ is greater than $\mu>0$, which is a contradiction to the property of thin trapped. Thus, we finish the proof of Claim 3.

For $a>0$, assume  $\mu<\frac{a\lambda}{4(1-a)}$. 
Assuming $n\geq N$ and $\varepsilon$ small enough, we have 
\begin{eqnarray*}
\|Df^{\frac{n}{1-a}+k}(q)\mid_{E^{c}}\|&\leq&\|Df^{n}_{q}\mid_{E^{c}}\|\cdot\|Df^{\frac{an}{1-a}+k}_{f^{n}(q)}|_{E^c}\| \\
&\leq& e^{n\mu}\cdot\|Df^{\frac{an}{1-a}+k}_{f^{n}(q)}|_{E^c}\|\\
&\leq& e^{n\mu}\cdot e^{\varepsilon \frac{an}{1-a}}\cdot\|Df^{\frac{an}{1-a}}_{p}|_{E^c}\|\\
&\leq&e^{n\mu+\varepsilon \frac{an}{1-a}}\cdot e^{-\lambda \frac{an}{1-a}}\\
&\leq&e^{ ((1-a)\mu-a\lambda+\varepsilon)(\frac{n}{1-a}+k)}\\
&\leq&e^{((1-a)\mu-\frac{a}{2}\lambda)(\frac{n}{1-a}+k)}\\
&\leq&e^{-\frac{a\lambda}{4}(\frac{n}{1-a}+k)},
\end{eqnarray*}
where the third inequality comes from the fact that $f^n(q)$ is close to $p$ and the fact that $k$ only depends on $\varepsilon$ and thus can be controlled when $n$ is large enough.  
We complete the proof of the hyperbolic closing property. 
\end{proof}

\section{Exponential growth of hyperbolic points and entropy} We adopt the notation  $P_n^a(f)$ given in the introduction with a little adjustment. 
Let
\begin{eqnarray*}  P^a_n(f|_{CH^1(p)})  &=&\{x\in CH^1(p)|f^n(x)=x, \lambda(x)\geq \frac{a\lambda}{4}>0,\\
&& \text{ where } \lambda(x) \text{ satisfies } \|Df^{n}|_{E^{c}(f^i(x))}\|= e^{-\lambda(x)n}\},
\end{eqnarray*}
where $\lambda$ is the largest negative Lyapunov exponent  of the hyperbolic periodic points $p$.

The following observation says if the central bundle  is one dimensional, then for any orbit in the collection of hyperbolic periodic orbits with  Lyapunov exponents uniformly bounded away from zero,  there exists a point in this orbit with uniform size of stable manifold and unstable manifold.     
The size  of these  stable manifolds and unstable manifolds  is irrelevant to the period.  It is only relevant  to the bound of Lyapunov exponents.  This result has been given in Lemma 3.3 of \cite{C2}. 
\begin{Lem}\label{Separated}\cite{C2}Assume that there is a dominated splitting $E^s\oplus E^c\oplus E^u$ for a diffeomorphism $f$ on a compact invariant set $K$.  Assume the dimension of $E^c$ is $1$. Then, for any $\delta>0$, there exists $\rho>0$ with the following property: Let $\mathcal{O}\subset K$ be a periodic orbit whose Lyapunov exponents along $E^s\oplus E^c$ are smaller than $-\delta$. Then there exists $q\in\mathcal{O}$ whose stable manifolds contains the plaque at $q$ along $E^s\oplus E^c$ direction with radius $\rho$.   
\end{Lem}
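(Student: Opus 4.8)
The plan is to exploit the one-dimensionality of $E^c$ together with the domination $E^s\oplus E^c \oplus E^u$, so that the relevant contraction/expansion estimates along a periodic orbit can be transferred between points of the orbit up to a uniform multiplicative constant. First I would fix $\delta>0$ and consider a periodic orbit $\mathcal{O}=\{x, f(x),\ldots, f^{P-1}(x)\}$ of period $P$ with all Lyapunov exponents along $E^s\oplus E^c$ less than $-\delta$. Since $\dim E^c = 1$, the Lyapunov exponent along $E^c$ is literally $\tfrac1P\log\|Df^P|_{E^c(x)}\|$, and by domination the exponent along $E^s$ is even more negative; hence $\|Df^P|_{E^s\oplus E^c(x)}\| \le e^{-\delta P}$. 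The point is that this is a statement about the return map $Df^P$, and I want to upgrade it to a genuine local stable plaque of size independent of $P$.

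The key step is a "uniform graph transform / Hadamard–Perron along the orbit" argument, carried out on the plaque family $\mathcal{W}^{cs}$ (or directly on the bundle $E^s\oplus E^c$). Because the splitting is dominated and defined on the whole compact set $K$, there are uniform constants (independent of the orbit) controlling the angles between $E^s\oplus E^c$ and $E^u$, and controlling the nonlinearity of $f$ in the exponential charts. The estimate $\|Df^P|_{E^s\oplus E^c}\|\le e^{-\delta P}$ together with domination gives a uniform hyperbolic splitting for the return cocycle, so the standard stable manifold theorem for the hyperbolic fixed point $x$ of $f^P$ yields a local stable manifold; the content is that its size can be taken uniform in $P$. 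For this I would invoke a telescoping/pigeonhole argument: among the $P$ iterates, the product of norms $\prod_{i} \|Df|_{E^s\oplus E^c(f^i(x))}\|$ is $\le e^{-\delta P}$, so there must be a point $q=f^j(x)\in\mathcal{O}$ at which the "backward" partial products are uniformly bounded, i.e. $\|Df^{-k}|_{E^s\oplus E^c(q)}\|$ stays controlled for all $k\ge 0$ up to a constant depending only on $\delta$ (not $P$) — this is exactly the Pliss-type lemma. At such a $q$, the local stable plaque along $E^s\oplus E^c$ has radius bounded below by a $\rho=\rho(\delta)>0$, and one checks it is contained in the real stable set of $q$ because trapping of $\mathcal{W}^{cs}$ (from the definition of chain hyperbolicity) forces forward iterates to stay in the plaques and contract.

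The main obstacle I expect is making the uniformity genuinely independent of the period $P$: a naïve application of the stable manifold theorem to the fixed point $x$ of $f^P$ gives a plaque whose size a priori degenerates as $P\to\infty$. The Pliss lemma is what resolves this — it produces the special point $q\in\mathcal{O}$ where the one-step contraction rates have bounded backward partial sums — but one must be careful that the Pliss constants depend only on $\delta$ and on the uniform bounds $\max_{x\in K}\|Df^{\pm1}\|$, and that the graph transform on the plaque family $\mathcal{W}^{cs}$ has domain of a uniform size, which is where the thin trapped / locally invariant plaque family hypothesis and the compactness of $CH^1(p)$ enter. Once $q$ and $\rho(\delta)$ are fixed, the remaining verification — that the plaque $\mathcal{W}^{cs}_\rho(q)$ lies in $W^s(q)$ — is routine from trapping and domination, so I would only sketch it. (Alternatively, one may simply cite Lemma 3.3 of \cite{C2}, since the statement is quoted verbatim from there.)
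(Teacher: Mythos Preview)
Your proposal is correct and matches the paper's approach: the paper does not prove this lemma at all but simply cites it as Lemma~3.3 of \cite{C2}, adding only the one-line Remark that the point $q\in\mathcal{O}$ can be taken to be a Pliss time furnished by the Pliss lemma~\cite{P}. Your plan---use Pliss to find $q$ with uniformly controlled forward partial products along $E^s\oplus E^c$, then run a uniform graph transform on the plaque family to get a stable plaque of size $\rho(\delta)$ independent of the period---is exactly the argument behind that Remark, and your closing parenthetical (just cite \cite{C2}) is literally what the paper does.
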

\begin{Rem}The Pliss time given by the Pliss lemma \cite{P} for every hyperbolic periodic orbit $\mathcal{O}$ can be the $q$ given in Lemma \ref{Separated}.
\end{Rem}

\begin{Cor}\label{expansive contants} For $0<a<1$, there exists a constant $ \xi(a) $ such that for any $n\in \mathbb{N}^{+}$ and any $x,y\in
  P^a_n(f|_{CH^1(p)})  $ with $d(f^i(x),f^i(y))\leq  \xi(a) , \forall i=1,\dots,n$, then
$x=y$. Moreover, $\sharp   P^a_n(f|_{CH^1(p)})  <\infty.$
\end{Cor}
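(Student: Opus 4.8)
The plan is to combine the uniform hyperbolicity of $E^s$ and $E^u$ with the quantitative control on the center provided by Lemma \ref{Separated}. First I fix $\delta=\frac{a\lambda}{4}>0$; by definition every $x\in P^a_n(f|_{CH^1(p)})$ is a periodic orbit whose Lyapunov exponent along $E^c$ is at most $-\delta$, and since $E^s$ is uniformly contracted, the exponents along $E^s\oplus E^c$ are all smaller than $-\delta$ as well. Lemma \ref{Separated} then supplies a single $\rho=\rho(a)>0$, independent of $n$, such that each such orbit $\orb(x)$ contains a point $q_x$ whose stable manifold contains the $E^{cs}$-plaque of radius $\rho$ at $q_x$; symmetrically, applying Lemma \ref{Separated} to $f^{-1}$ (using that $E^u$ is uniformly expanded), we get a point $q_x'\in\orb(x)$ whose unstable manifold contains the $E^{cu}$-plaque of radius $\rho$. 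By passing to a uniform bound I may take one radius $\rho(a)$ working on both sides at a point reachable within a bounded number of iterates; alternatively, and more cleanly, I use the fact that at \emph{some} iterate $f^j$ one has both a stable plaque of size comparable to $\rho$ (because $E^s$ contracts uniformly, so the stable size is actually uniform at every point) and an unstable plaque of size $\rho$ at a Pliss time.

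Next I choose $\xi(a)>0$ small enough that the local product structure of Lemma \ref{local product} applies at scale $\rho(a)$: if $d(u,v)\le\xi(a)$ with $u,v\in CH^1(p)$, then $\mathcal{W}^{cs}_{\rho(a)}(u)$ and $\mathcal{W}^{cu}_{\rho(a)}(v)$ meet in a single point of $CH^1(p)$. Now suppose $x,y\in P^a_n(f|_{CH^1(p)})$ satisfy $d(f^i(x),f^i(y))\le\xi(a)$ for $i=1,\dots,n$ (hence, by periodicity, for all $i\in\mathbb{Z}$). Translating the index so that $i=0$ is a good point for $x$, the point $y$ (at that same index) lies within $\xi(a)$ of $x$, so it lies in $W^s_{\rho(a)}(x)$ up to the local-product correction; running the same argument at a good index for the unstable side forces $y\in W^u_{\rho(a)}(x)$. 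A point lying in both the local stable and local unstable manifold of $x$, combined with the forward and backward contraction estimates (forward iterates stay $\varepsilon$-close and contract along $E^{cs}$, backward iterates contract along $E^{cu}$), must equal $x$; this is the standard expansivity argument for sets with local product structure, and the one-dimensionality of $E^c$ is exactly what prevents a nontrivial center segment from surviving (mirroring the interval-contraction argument already used at the end of the proof of Theorem \ref{specification}). Hence $x=y$.

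Finally, $\sharp P^a_n(f|_{CH^1(p)})<\infty$ is immediate: the set $P^a_n(f|_{CH^1(p)})$ is $(n,\xi(a))$-separated in the sense just proved, i.e. any two distinct points are moved more than $\xi(a)$ apart at some time in $\{1,\dots,n\}$, so in particular they are $\xi(a)$-separated at time $0$; thus $P^a_n(f|_{CH^1(p)})$ is a $\xi(a)$-separated subset of the compact set $CH^1(p)$ and is therefore finite.

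I expect the main obstacle to be the bookkeeping in the first step: Lemma \ref{Separated} produces a \emph{good point in the orbit} rather than control at every point, so I must be careful that the index shifts used for the stable side and for the unstable side can be reconciled (either both realized at one common iterate, or the gap between them absorbed into a harmless loss in the constant $\rho(a)$, using uniform contraction/expansion of $E^s$ and $E^u$). Once a single scale $\rho(a)$ valid on both sides is pinned down, the expansivity argument and the finiteness are routine.
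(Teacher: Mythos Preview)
Your approach---Lemma \ref{Separated} plus local product structure plus expansivity---is exactly the route the paper has in mind; the paper gives no explicit argument and treats the statement as an immediate consequence of Lemma \ref{Separated}. Two points in your write-up need correction, however.

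First, in this paper's setting the chain-hyperbolic splitting is $E^{cs}\oplus E^{u}$ with $E^{u}$ \emph{uniformly} expanding (Definition \ref{CH^1(p)}). Consequently the strong unstable leaves $W^{u}$ already have uniform size at \emph{every} point; no Pliss argument and no appeal to Lemma \ref{Separated} for $f^{-1}$ are needed on the unstable side. In fact your proposed application of Lemma \ref{Separated} to $f^{-1}$ along $E^{c}\oplus E^{u}$ does not go through: for $x\in P^{a}_{n}(f|_{CH^1(p)})$ the center exponent is $\le -\frac{a\lambda}{4}$ for $f$, hence $\ge \frac{a\lambda}{4}>0$ for $f^{-1}$, so the hypothesis ``exponents smaller than $-\delta$'' fails along $E^{c}$ for $f^{-1}$. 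Your ``alternative'' sentence has the two sides reversed: it is the \emph{unstable} size that is uniform at every point, while the \emph{stable} manifold along $E^{cs}=E^{s}\oplus E^{c}$ is only guaranteed to have size $\rho(a)$ at a Pliss time furnished by Lemma \ref{Separated}. Once you place yourself at that Pliss iterate $q_x=f^{j}(x)$, the uniform $W^{u}$ and the genuine $W^{s}_{\rho}(q_x)$ are simultaneously available there, and the bookkeeping worry you flag disappears.

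Second, in the finiteness step, being $(n,\xi(a))$-separated does \emph{not} imply being $\xi(a)$-separated at time $0$, so your last sentence is incorrect as stated. The conclusion is still easy: $P^{a}_{n}(f|_{CH^1(p)})$ is a closed (hence compact) subset of $CH^{1}(p)$ which is separated in the Bowen metric $d_{n}$; equivalently, finitely many Bowen $(n,\xi(a)/2)$-balls cover $CH^{1}(p)$ and each contains at most one point of an $(n,\xi(a))$-separated set.

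With these two fixes your proof is complete and coincides with the argument implicit in the paper.
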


\begin{Def} A subset $s(n,\varepsilon)\subset CH^1(p)$ is called  a {\it  $(n,\varepsilon)$-separated set} if for any two points  $x\neq y\in s(n,\varepsilon)$, there exists $i\in [0,n-1]$ such that
$$d(f^i(x),f^i(y))\geq \varepsilon.$$  
\end{Def}

Applying  Corollary \ref{expansive contants}, one deduces that
$$\sharp   P^a_n(f|_{CH^1(p)})  \leq S(n, \xi(a) ),$$ where $S(n, \xi(a) )$
denotes the number of  separated set $s(n, \xi(a) )$ with  maximal cardinal.  For any small constant $\varepsilon$,
$h_{\text{top}}(f|_{CH^1(p)})=\limsup_{n\rightarrow +\infty}1/n\log \sharp S(n, \varepsilon ).$
 So, we have 
\begin{eqnarray}\label{lower}\limsup_{n\rightarrow +\infty}\frac{1}{n}\log \sharp   P^a_n(f|_{CH^1(p)})  \leq \limsup_{n\rightarrow +\infty}\frac{1}{n}\log \sharp S(n, \xi(a) )\leq h_{\text{top}}(f|_{CH^1(p)}).\end{eqnarray}

Now we start to prove that the exponential growth rate of the  number of hyperbolic periodic points on $CH^1(p)$ is sufficient  to control  the topological entropy on $CH^1(p)$.

\begin{Prop}\label{exponential control}Let $f$ be a diffeomorphism on a compact manifold $M$ with  $CH^1(p)$. For some $a>0$, let
\begin{eqnarray*}  P^a_n(f|_{CH^1(p)})  &=&\{x\in CH^1(p)|f^n(x)=x, \lambda(x)\geq \frac{a\lambda}{4}>0,\\
&& \text{ where } \lambda(x) \text{ satisfies } \|Df^{n}|_{E^{c}(f^i(x))}\|= e^{-\lambda(x)n}\},
\end{eqnarray*}
where $\lambda$ is the largest negative Lyapunov exponent  of the hyperbolic periodic points $p$. 
Then, we have \begin{eqnarray}(1-a)h_{\text{top}}(f|_{CH^1(p)})\leq\limsup_{n\rightarrow +\infty}\frac{1}{n}\log \sharp
  P^a_n(f|_{CH^1(p)})  .\end{eqnarray}
\end{Prop}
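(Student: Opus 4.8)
The plan is to deduce the lower bound for the exponential growth rate of $\sharp P^a_n(f|_{CH^1(p)})$ from the hyperbolic closing property of Theorem \ref{specification}, applied to a well-chosen $(n,\varepsilon)$-separated subset of $CH^1(p)$. Concretely, fix a small $\varepsilon>0$ and, for each large $n$, let $s(n,\varepsilon)$ be a maximal $(n,\varepsilon)$-separated set, so that $\sharp s(n,\varepsilon)=S(n,\varepsilon)$ and $\limsup_n \frac1n\log S(n,\varepsilon)$ approaches $h_{\text{top}}(f|_{CH^1(p)})$ as $\varepsilon\to 0$. For each $x\in s(n,\varepsilon)$ the pair of points $x, f^n(x)\in CH^1(p)$ lies in the same orbit, so the hyperbolic closing property produces a hyperbolic periodic point $z=z(x)\in CH^1(p)$ with period $P(z)$ satisfying $\frac{n}{1-a}\le P(z)\le \frac{n}{1-a}+2M(\varepsilon)$, with $d(f^i(z),f^i(x))\le\varepsilon$ for all $0\le i\le n$, and with center Lyapunov exponent smaller than $-\frac{a\lambda}{4}$. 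In particular $z(x)\in P^b_{P(z)}(f|_{CH^1(p)})$ for the appropriate comparison constant, i.e. $z(x)$ is counted in the set $P^a_{P(z)}$ as defined just above the Proposition.

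The next step is a counting/injectivity argument. First I would group the periodic points $z(x)$, $x\in s(n,\varepsilon)$, by their exact period: since all periods lie in an interval of length $2M(\varepsilon)$ (a constant depending only on $\varepsilon$), by pigeonhole there is a single integer $N=N(n)$ with $\frac{n}{1-a}\le N\le \frac{n}{1-a}+2M(\varepsilon)$ such that at least $\frac{S(n,\varepsilon)}{2M(\varepsilon)+1}$ of the points $x$ give rise to a period-$N$ hyperbolic periodic point $z(x)$. Second, I would show the assignment $x\mapsto z(x)$ is at worst boundedly-to-one on this subcollection: if $z(x)=z(x')$ then, because both shadow their respective length-$n$ orbit segments within $\varepsilon$, we get $d(f^i(x),f^i(x'))\le 2\varepsilon$ for $0\le i\le n$; choosing $\varepsilon$ smaller than half the expansivity constant $\xi(a)$ from Corollary \ref{expansive contants} is not quite enough because $x,x'$ need not be periodic, so instead I would invoke that $s(n,\varepsilon)$ is $(n,\varepsilon)$-separated together with a standard covering argument: each fiber of $x\mapsto z(x)$ is $(n,2\varepsilon)$-close, hence has cardinality bounded by a constant $C(\varepsilon)$ independent of $n$ (the number of $\varepsilon$-separated points in a $2\varepsilon$-Bowen ball, which is uniformly bounded by compactness). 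Consequently
\begin{eqnarray*}
\sharp P^a_N(f|_{CH^1(p)}) \ \ge\ \frac{S(n,\varepsilon)}{C(\varepsilon)\,(2M(\varepsilon)+1)}.
\end{eqnarray*}

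Finally I would take logarithms, divide by $N$, and pass to the limsup. Since $N$ is comparable to $\frac{n}{1-a}$ and $M(\varepsilon), C(\varepsilon)$ are constants, $\frac1N\log \sharp P^a_N \ge \frac{1-a}{n}\bigl(\log S(n,\varepsilon) - O_\varepsilon(1)\bigr)(1+o(1))$, so
\begin{eqnarray*}
\limsup_{N\to\infty}\frac1N\log\sharp P^a_N(f|_{CH^1(p)}) \ \ge\ (1-a)\,\limsup_{n\to\infty}\frac1n\log S(n,\varepsilon).
\end{eqnarray*}
Letting $\varepsilon\to 0$ makes the right-hand side converge to $(1-a)h_{\text{top}}(f|_{CH^1(p)})$, which gives the claimed inequality. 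The main obstacle I anticipate is the bounded-to-one step: one must be careful that distinct separated points can be sent to the same periodic orbit only boundedly often, and that the bound is uniform in $n$; this is where the expansivity estimate of Corollary \ref{expansive contants} (for the periodic targets) and a compactness/covering argument for the Bowen balls of the non-periodic sources must be combined cleanly. A secondary technical point is bookkeeping the relation between the period $N$ of $z$ and the length $n$ of the shadowed segment so that the factor $(1-a)$ emerges with the correct sign after the change of index from $n$ to $N$.
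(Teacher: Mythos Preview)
Your strategy is the same as the paper's: shadow each orbit segment from a maximal separated set by a hyperbolic periodic point via Theorem~\ref{specification}, pigeonhole over the finitely many possible periods, and pass to the limit. The change of index (you go from $n$ to $N\sim n/(1-a)$, the paper goes from $(1-a)n$ to $n$) is immaterial.

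There is, however, a genuine gap in your bounded-to-one step. You claim that the number of $(n,\varepsilon)$-separated points lying in a single $2\varepsilon$-Bowen ball is bounded by a constant $C(\varepsilon)$ \emph{independent of $n$}, ``by compactness''. Compactness gives no such bound: the Bowen metrics $d_n$ do not in general have a doubling property uniform in $n$, so a $d_n$-ball of radius $2\varepsilon$ may contain arbitrarily many $d_n$-$\varepsilon$-separated points as $n$ grows. (Covering each $B(f^i(x),2\varepsilon)$ by $C$ balls of radius $\varepsilon/2$ only yields $C^n$ tubes.) Since your source points $x$ are not periodic, Corollary~\ref{expansive contants} does not help either, as you correctly note.

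The fix is trivial and is exactly what the paper does: work with a $(n,4\varepsilon)$-separated set (or $3\varepsilon$) rather than $(n,\varepsilon)$. Then if $z(x)=z(x')$ you get $d(f^i(x),f^i(x'))\le 2\varepsilon<4\varepsilon$ for all $0\le i\le n$, contradicting separation outright, so $x\mapsto z(x)$ is honestly injective and no covering argument is needed. Since $\limsup_n\frac1n\log S(n,4\varepsilon)\to h_{\text{top}}(f|_{CH^1(p)})$ as $\varepsilon\to 0$ just as well, the rest of your computation goes through unchanged. One further small omission: Theorem~\ref{specification} requires $f$ to preserve the orientation of $E^c$; the paper disposes of this at the outset by replacing $f$ with $f^2$ if necessary.
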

\begin{proof}
We assume that $f$ preserves the orientation of $E^c$. Otherwise, consider  $f^2$.  
By  Theorem  \ref{specification},  for  any $a>0$ and any $\varepsilon>0$  small enough, there
exists $M(\varepsilon)\in\mathbb{N}$ such that for any two points in the same orbit,  $x, f^n(x)\in CH^1(p)$,  there is a hyperbolic periodic point $z\in CH^1(p)$ with period $P(z)$ such that 
\begin{enumerate}
\item the Lyapunov exponent along the central direction $E^c$ of periodic point $z$ is smaller than $-\frac{a\lambda}{4};$
\item  $d(f^i(z), f^i(x))\leq \varepsilon, \forall\, 0\leq i\leq n$ and  $\frac{n}{1-a}\leq P(z)\leq \frac{n}{1-a}+2M(\varepsilon).$ 
\end{enumerate}
Fix an arbitrarily positive constant $a>0$.  Let $n$ large enough such that $\frac{2M(\varepsilon)}{\frac{n}{a}+2M(\varepsilon)}$ is small enough. 
So, for any point $x\in s((1-a)n,4\varepsilon)$, there exists a hyperbolic periodic point $z\in P^a_{n+k}(f|_{CH^1(p)})$, which can $\varepsilon$-shadow the segment $\{x, f^n(x)\}$.  Thus, we have $$S((1-a)n,\,\,4\varepsilon)\leq \frac{1}{M(\varepsilon)}\sum_{i=n}^{i=n+M(\varepsilon)} \sharp
  P^a_{i}(f|_{CH^1(p)}). $$ 
Moreover, we have  $$\limsup_{n\rightarrow
\infty}\frac{1}{n}\log \sharp S((1-a)n,\,\,4\varepsilon)\leq
\limsup_{n\rightarrow +\infty}\frac{1}{n}\log \sharp   P^a_{n}(f|_{CH^1(p)})  .$$
Finally, we have \begin{eqnarray*} \label{dai a }
(1-a)h_{\text{top}}(f|_{CH^1(p)})&=&\lim_{\varepsilon\rightarrow 0}\limsup_{n\rightarrow
\infty}\frac{1}{n}\log \sharp
S(an,\,\,8\varepsilon)\\
&\leq&\limsup_{n\rightarrow
+\infty}\frac{1}{n}\log \sharp   P^a_{n}(f|_{CH^1(p)})  .
\end{eqnarray*}

\end{proof}
\begin{proof}[Proof of Theorem \ref{growth of periodic points }]
From Proposition \ref{exponential control} and equation \eqref{lower},  we deduce $$h_{\text{top}}(f|_{CH^1(p)})= \lim_{a\rightarrow 0}\limsup_{n\rightarrow
+\infty}\frac{1}{n}\log \sharp   P^a_n(f|_{CH^1(p)}).$$
\end{proof}

\section{Density of Periodic measures in the space of invariant measures}
The prevalence of hyperbolic periodic measures in the space of invariant measures is also very interesting.  K. Sigmund in the seventies proved the following results for  Axiom A diffeomorphisms (using the specification property):
\begin{Thm}\cite{S}Let $\Lambda$ be a basic set of an Axiom A diffeomorphism $f$. Denote by $\mathcal{M}_f(\Lambda)$  the set of invariant probabilities supported on $\Lambda$. Then, we have :
\begin{enumerate}
\item every $\mu\in\mathcal{M}_f(\Lambda)$ is approximated by hyperbolic periodic measures $\mu_n\in \mathcal{M}_f(\Lambda)$;
\item the generic measure $\mu\in\mathcal{M}_f(\Lambda)$ is ergodic;
\item the generic measure $\mu\in\mathcal{M}_f(\Lambda)$ has entropy zero: $h_{\mu}(f)=0.$
\end{enumerate}
\end{Thm}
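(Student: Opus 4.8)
The plan is to follow Sigmund's original argument, whose engine is the specification property of the basic set $\Lambda$; the scheme carries over to $CH^1(p)$ after the modifications noted at the end, with the closing property of Theorem~\ref{specification} taking the place of specification. Fix a metric $D$ on $\mathcal{M}_f(\Lambda)$ inducing the weak$^*$ topology, say $D(\mu,\nu)=\sum_{k\ge1}2^{-k}\min\{1,\,|\int\varphi_k\,d\mu-\int\varphi_k\,d\nu|\}$ for a countable dense family $\{\varphi_k\}\subset C(\Lambda)$, and write $\mathcal{E}_n(x)=\frac1n\sum_{i=0}^{n-1}\delta_{f^i(x)}$ for the orbital measure of a segment and $\mu_z=\mathcal{E}_{P(z)}(z)$ for the periodic measure carried by a periodic point $z$ of period $P(z)$.

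The heart of the matter is part (1), density of hyperbolic periodic measures. I would first record the elementary shadowing-to-weak$^*$ comparison: if $z$ is periodic with period $P$ satisfying $n\le P\le n+M$ and the orbit of $z$ $\varepsilon$-shadows $x,f(x),\dots,f^{n-1}(x)$, then $D(\mu_z,\mathcal{E}_n(x))$ is bounded by a modulus-of-continuity term (controlled by $\varepsilon$) plus a term of size $O(M/n)$ coming from the uncontrolled tail of length $\le M$, so $\mu_z$ can be forced arbitrarily $D$-close to $\mathcal{E}_n(x)$. Then, given $\mu\in\mathcal{M}_f(\Lambda)$ and $\eta>0$, I would: write $\mu$ within $\eta/3$ of a finite convex combination $\sum_{j=1}^r a_j\mu_j$ of ergodic measures (these combinations are dense, since the ergodic measures are exactly the extreme points of the Choquet simplex $\mathcal{M}_f(\Lambda)$); use Birkhoff's theorem to pick, for each $j$, a point $x_j$ and an integer $n_j$, all $n_j$ above any prescribed threshold and with $n_j/(n_1+\dots+n_r)$ within $\eta$ of $a_j$, such that $D(\mathcal{E}_{n_j}(x_j),\mu_j)<\eta$; then invoke the specification property (for finitely many segments) to produce a periodic point $z\in\Lambda$ whose orbit $\varepsilon$-shadows the concatenation of the segments $x_j,\dots,f^{n_j-1}(x_j)$, with period $P(z)=\sum_j n_j+g$, $g\le rM(\varepsilon)$. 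Choosing $\varepsilon$ small and the $n_j$ large compared with $M(\varepsilon)$ makes $D(\mu_z,\sum_j a_j\mu_j)<\eta/3$, hence $D(\mu_z,\mu)<\eta$. Since every periodic point of a basic set is hyperbolic, $\mu_z$ is a hyperbolic periodic measure, which proves (1).

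Parts (2) and (3) are then formal. For (2): in a compact metrizable convex set the extreme points form a $G_\delta$, and the extreme points of $\mathcal{M}_f(\Lambda)$ are exactly the ergodic measures; by (1) they contain a dense set, so the ergodic measures are a dense $G_\delta$, i.e.\ residual. For (3): a basic set is expansive, so the entropy map $\mu\mapsto h_\mu(f)$ is upper semicontinuous on $\mathcal{M}_f(\Lambda)$, whence $\{\mu:h_\mu(f)=0\}=\bigcap_{m\ge1}\{\mu:h_\mu(f)<1/m\}$ is a $G_\delta$; it contains the dense set of periodic measures (each of zero entropy), hence is residual. The intersection of these two residual sets gives the generic $\mu$ ergodic with zero entropy.

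The main obstacle lies entirely in part (1), and within it in the order of quantifiers of the shadowing estimate: one must fix $\eta$, then $\varepsilon$ (which determines $M(\varepsilon)$), and only afterwards choose the segment lengths $n_j\gg M(\varepsilon)$, so that the gaps of the pseudo-orbit occupy a vanishing fraction of the period. Passing from $\Lambda$ to $CH^1(p)$ introduces two additional points. First, the closing property of Theorem~\ref{specification} yields a periodic point inside $CH^1(p)$ but does not by itself give hyperbolicity; to obtain hyperbolic periodic measures one uses instead the hyperbolic closing property, which inserts an excursion of relative length $a$ near the hyperbolic point $p$, so the resulting periodic measure is close to $(1-a)\mu+a\,\mu_p$ rather than to $\mu$ — letting $a\to0$ still yields density of hyperbolic periodic measures, which is all that (2) and (3) require. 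Second, the upper semicontinuity of the entropy map, automatic here by expansiveness, is not free in the partially hyperbolic setting and would have to be established (or part (3) adapted) separately.
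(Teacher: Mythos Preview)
The paper does not prove this theorem: it is quoted from Sigmund \cite{S} as background, and the paper simply remarks that ``arguing in the same fashion as in \cite{S}, \emph{mutatis mutandis}'' one obtains the corresponding statement for $CH^1(p)$ from the closing property of Theorem~\ref{specification}. Your sketch is a faithful and correct outline of Sigmund's argument (specification $\Rightarrow$ density of periodic measures; extreme points form a $G_\delta$; expansiveness $\Rightarrow$ upper semicontinuity of entropy), and your remarks on the passage to $CH^1(p)$---using the hyperbolic closing property with $a\to 0$ to force hyperbolicity, and flagging that upper semicontinuity of the entropy map is no longer automatic---are exactly the adaptations the paper has in mind but does not spell out.
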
  

Arguing in the same fashion as in \cite{S}, {\it mutatis mutandis}, we have the following corollary from the closing property in Theorem \ref{specification}. 
\begin{Cor}\label{distribution of periodic measures}Let $f$ be a diffeomorphism on a compact manifold $M$  with  $CH^1(p)$. Denote by $\mathcal{M}_f$ the set of invariant probabilities supported on $CH^1(p)$. If $f$ preserves the orientation of the subbundle $E^c$, then we have :
\begin{enumerate}
\item every $\mu\in\mathcal{M}_f$ is approximated by hyperbolic periodic measures $\mu_n\in \mathcal{M}_f$;
\item the generic measure $\mu\in\mathcal{M}_f$ is ergodic;
\item the generic measure $\mu\in\mathcal{M}_f$ has entropy zero: $h_{\mu}(f)=0.$
\end{enumerate}
\end{Cor}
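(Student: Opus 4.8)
The plan is to run Sigmund's argument \cite{S} essentially verbatim, with the closing property of Theorem \ref{specification} in the role of the specification property and with the hyperbolic closing property used to turn the periodic measures so produced into hyperbolic ones. Fix a metric $D$ on $\mathcal{M}_f$ inducing the weak-$*$ topology, say $D(\mu,\nu)=\sum_{k\ge 1}2^{-k}\|\phi_k\|_\infty^{-1}\,|\int\phi_k\,d\mu-\int\phi_k\,d\nu|$ for a dense sequence $(\phi_k)$ in $C(M)$; then $\mathcal{M}_f$ is a compact metrizable Choquet simplex, hence a Baire space, and for a periodic point $z$ we write $\mu_z=\frac1{P(z)}\sum_{i=0}^{P(z)-1}\delta_{f^i(z)}$, and $\mu_p$ for the orbit measure of $p$. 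The heart of the matter is assertion (1), the density of hyperbolic periodic measures; once that is in hand, (2) and (3) are pure Baire category, exactly as in \cite{S}.

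For (1) I would argue in two steps. First, to see that periodic measures are dense, note that by Krein--Milman it suffices to approximate a finite convex combination $\sum_{i=1}^r\alpha_i\nu_i$ with the $\nu_i$ ergodic on $CH^1(p)$ and $\alpha_i$ rational. Pick, via Birkhoff's theorem, generic points $x_i\in CH^1(p)$ for $\nu_i$ and integers $n_i$ roughly proportional to $\alpha_i$ with $\sum_i n_i$ large and $\frac1{n_i}\sum_{j<n_i}\delta_{f^j(x_i)}$ close to $\nu_i$, and feed the $r$ orbit segments $x_i,\dots,f^{n_i}(x_i)$ to the many-segment form of the closing property (Definition \ref{Specification}). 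This produces a periodic point $w\in CH^1(p)$ which $\varepsilon$-shadows the concatenation, with period $\sum_i n_i$ plus at most $rM(\varepsilon)$ connecting iterates; since the $n_i$ may be taken arbitrarily large while $M(\varepsilon)$ depends only on $\varepsilon$, the connecting iterates are negligible and $D\big(\mu_w,\sum_i\alpha_i\nu_i\big)\le\omega(\varepsilon)+o(1)$, $\omega$ a modulus of continuity for the relevant $\phi_k$'s. Second, given $\mu$ and $\eta>0$, fix $a\in(0,1)$ with $D\big((1-a)\nu+a\mu_p,\nu\big)\le 2a<\eta/3$ for all $\nu$, choose $w$ periodic with $D(\mu_w,\mu)<\eta/3$, and apply the hyperbolic closing property to the segment $w,f(w),\dots,f^{LP(w)}(w)=w$ with $L$ large and $\varepsilon$ small: one obtains a hyperbolic periodic point $z\in CH^1(p)$ whose central Lyapunov exponent is $<-a\lambda/4<0$, which $\varepsilon$-shadows $L$ turns of the orbit of $w$, and of period $\tfrac{LP(w)}{1-a}+k$ with $k\le 2M(\varepsilon)$. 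The orbit of $z$ then spends a proportion $\approx 1-a$ of its time $\varepsilon$-close to the orbit of $w$ and a proportion $\approx a$ near the orbit of $p$, the $k$ spare iterates being negligible once $LP(w)\gg M(\varepsilon)$, so $D\big(\mu_z,(1-a)\mu_w+a\mu_p\big)<\eta/3$ and hence $D(\mu_z,\mu)<\eta$; since $E^s$ is uniformly contracted, $E^u$ uniformly expanded and the centre exponent of $z$ is negative, $z$ is a hyperbolic periodic point, proving (1).

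For (2) I would use that the ergodic measures are the extreme points of the metrizable compact convex set $\mathcal{M}_f$, hence a $G_\delta$ subset, and are dense by (1) because periodic measures are ergodic; the Baire property then makes the ergodic measures residual. For (3) I would invoke upper semicontinuity of the entropy map $\mu\mapsto h_\mu(f)$ on $\mathcal{M}_f$: this holds because $f|_{CH^1(p)}$ is entropy-expansive, the uniform hyperbolicity of $E^s\oplus E^u$ confining every infinite Bowen ball to an arbitrarily short arc of the one-dimensional thin-trapped centre, on which $f$ carries zero entropy (failing this, one argues directly with a fixed sequence of finite Borel partitions of vanishing diameter). Upper semicontinuity puts the entropy map in the first Baire class, hence continuous on a dense $G_\delta$ set $\mathcal C$, and since the zero-entropy periodic measures are dense by Step 1, the entropy vanishes on $\mathcal C$, so $\{\mu:h_\mu(f)=0\}$ is residual.

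I expect the only genuine obstacles to lie in the density step: verifying carefully that the many-segment closing property really glues together, inside $CH^1(p)$, a periodic orbit that equidistributes along a prescribed convex combination of ergodic measures, and that the hyperbolic closing property keeps the centre exponent uniformly negative while distorting the empirical measure by only $O(a)+o(1)$. The entropy-expansiveness used in (3) also requires a short argument from the thin-trapped property, and is the one place where the non-expansiveness of $CH^1(p)$ (as opposed to Sigmund's Axiom A setting) has to be handled.
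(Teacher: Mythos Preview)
Your proposal is correct and follows precisely the approach the paper intends: the paper's own ``proof'' is the single sentence ``Arguing in the same fashion as in \cite{S}, \emph{mutatis mutandis}, we have the following corollary from the closing property in Theorem \ref{specification},'' and your write-up is exactly a careful execution of that sentence. Your two-step scheme for (1)---first using the many-segment closing property to approximate an arbitrary invariant measure by a periodic one, then applying the hyperbolic closing property (with small $a$) to replace that periodic orbit by a nearby hyperbolic one whose empirical measure is shifted only by $O(a)$ toward $\mu_p$---is the natural adaptation, and your observation that the extra $\tfrac{an}{1-a}$ iterates in Definition \ref{Specification2} are spent near $p$ (so $\mu_z\approx(1-a)\mu_w+a\mu_p$) is exactly what the construction in the proof of Theorem \ref{specification} gives. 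For (3), your appeal to entropy-expansiveness of $f|_{CH^1(p)}$ via the one-dimensional thin-trapped centre is consistent with the paper, which elsewhere (in proving Corollary \ref{measures}) also invokes upper semicontinuity of the entropy map; note that once upper semicontinuity is in hand the Baire-class-1 detour is unnecessary, since $\{h_\mu=0\}=\bigcap_n\{h_\mu<1/n\}$ is directly a $G_\delta$ containing the dense set of periodic measures.
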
  
 
About the existence of maximal entropy measures, there is a general result by Liao, Viana and Yang \cite{LVY}  that for any diffeomorphisms far away from tangency, there is an invariant probability measure with maximal entropy.
Combining  Corollary \ref{distribution of periodic measures} and the existence of maximal entropy measures, we get  the existence of  a maximal entropy measure that can be approximated by a sequence of measures supported  on hyperbolic periodic points for  $f|_{CH^1(p)}.$ 
\begin{Cor} \label{measures} Let $f$ be a diffeomorphism on a compact manifold $M$  with  $CH^1(p)$. Then, there exists an invariant measure $\mu$ supported  on $CH^1(p)$, such that 
$$h_{\mu}(f|_{CH^1(p)})=h_{\text{top}}(f|_{CH^1(p)})$$
and there exist $\mu_n$ supported on hyperbolic periodic points in $CH^1(p)$ such that   $\mu_n\rightarrow\mu$ in weak $*$ topology,  as $n\rightarrow +\infty$.  
\end{Cor}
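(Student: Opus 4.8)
The plan is to combine two already-established ingredients: the variational principle together with Corollary~\ref{distribution of periodic measures}(1) on one side, and the existence of a maximal entropy measure for $f|_{CH^1(p)}$ on the other. The latter comes from the result of Liao, Viana and Yang \cite{LVY}: since $CH^1(p)$ is a homoclinic class carrying a dominated splitting of the required type, $f|_{CH^1(p)}$ is (locally) far from tangencies in the relevant sense, so there exists an invariant probability $\mu$ supported on $CH^1(p)$ with $h_\mu(f|_{CH^1(p)}) = h_{\text{top}}(f|_{CH^1(p)})$. This gives the first displayed equality in the statement for free; the only real content is the approximation of this particular $\mu$ by hyperbolic periodic measures.

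The approach for the approximation is as follows. First I would invoke Corollary~\ref{distribution of periodic measures}(1): every invariant probability $\nu \in \mathcal{M}_f$ supported on $CH^1(p)$ is a weak$^*$ limit of hyperbolic periodic measures $\nu_n \in \mathcal{M}_f$; in particular this applies to the maximal entropy measure $\mu$ produced above. Thus there is a sequence $\mu_n$ of measures, each supported on a single hyperbolic periodic orbit inside $CH^1(p)$, with $\mu_n \to \mu$ in the weak$^*$ topology. Concatenating this with the variational-principle identity $h_\mu(f|_{CH^1(p)}) = h_{\text{top}}(f|_{CH^1(p)})$ yields exactly the two assertions of Corollary~\ref{measures}. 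One should be slightly careful about the orientation hypothesis: Corollary~\ref{distribution of periodic measures} is stated under the assumption that $f$ preserves the orientation of $E^c$, whereas Corollary~\ref{measures} is not; as in the proof of Proposition~\ref{exponential control}, one passes to $f^2$, which always preserves that orientation, applies the corollary there to get hyperbolic periodic measures for $f^2$ converging to (a lift of) $\mu$, and then notes that an $f^2$-periodic point is $f$-periodic, an $f^2$-invariant measure averages to an $f$-invariant one, and hyperbolicity along $E^c$ is unaffected, so the approximating measures can be taken for $f$.

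The only genuine subtlety — and hence the step I would flag as the main obstacle — is making sure the maximal entropy measure we feed into the machinery is genuinely supported on $CH^1(p)$ and that the entropy bookkeeping is internally consistent: $h_{\text{top}}(f|_{CH^1(p)})$ must be interpreted as the topological entropy of the restricted system, and $\mu$ as an element of $\mathcal{M}_f$, so that Corollary~\ref{distribution of periodic measures} is literally applicable to it. Once that identification is pinned down, the remainder is a one-line assembly: apply \cite{LVY} for existence, apply Corollary~\ref{distribution of periodic measures}(1) for approximation, and combine. I do not anticipate any new technical difficulties beyond what has already been developed in Sections~3 and~4; this corollary is essentially a packaging of the density statement plus an external existence theorem.
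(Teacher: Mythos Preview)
Your argument is correct and is, in fact, the route the paper itself sketches in the paragraph immediately preceding Corollary~\ref{measures}: combine \cite{LVY} for existence of a maximal entropy measure with Corollary~\ref{distribution of periodic measures}(1) for the approximation. The orientation issue via passage to $f^2$ is handled just as you say.

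However, the proof the paper actually writes out takes a different, more constructive path. Rather than invoking \cite{LVY} as a black box, the paper builds the maximal entropy measure directly out of the hyperbolic periodic points produced in Section~4: it sets
\[
\mu^a_i=\frac{1}{\sharp P^a_{n_i}(f|_{CH^1(p)})}\sum_{x\in P^a_{n_i}(f|_{CH^1(p)})}\delta_x,
\]
takes a weak$^*$ limit $\mu^a$ along a subsequence, and then a further limit $\mu=\lim_k\mu^{a_k}$ as $a_k\to 0$. A standard Misiurewicz-type computation (Lemma~\ref{sequences}) gives $h_{\mu^{a_k}}(f)\ge \lim_i \tfrac{1}{n_i(a_k)}\log\sharp P^{a_k}_{n_i(a_k)}$, and then upper semicontinuity of the entropy map together with Theorem~\ref{growth of periodic points } yields $h_\mu\ge h_{\text{top}}$. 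The approximation by hyperbolic periodic measures is built into the construction rather than obtained afterward from Corollary~\ref{distribution of periodic measures}. What your approach buys is brevity; what the paper's approach buys is that the maximal entropy measure is exhibited explicitly as an equidistribution limit over the very periodic points counted in Theorem~\ref{growth of periodic points }, making the link between the two main theorems internal rather than routed through an external existence result. Note, though, that the paper still appeals to upper semicontinuity of the entropy map without further justification, so some input of \cite{LVY}-type is implicitly present in both arguments.
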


Here we prefer to  construct a maximal invariant measure  from Theorem \ref{growth of periodic points }.  
From Proposition \ref{exponential control}, we can find a
subsequence $\{n_i\}_{i=0}^{\infty}$ (corresponding to $a$), such
that
\begin{eqnarray*}ah(f|_{CH^1(p)})\leq\lim_{i\rightarrow +\infty}\frac{1}{n_i}\log \sharp
P^a_{n_i}(f|_{CH^1(p)}).\end{eqnarray*}
Denote $$\mu^a_i=\frac{1}{\sharp P^a_{n_i}(f|_{CH^1(p)})}\sum_{x\in
 P^a_{n_i}(f|_{CH^1(p)})}\delta_x.$$
 Then, there exists a  subsequence
$\{i_k\}_{k=1}^{\infty}$ such that
$\mu^a= \lim_{k\rightarrow\infty}\mu^a_{i_k}$ and a
subsequence $\{a_k\}_{k=1}^{\infty}$ such that
$\mu=\lim_{k\rightarrow\infty}\mu^{a_k},$
where $a_k\rightarrow 1$ as $k\rightarrow\infty.$

\begin{Lem} \label{sequences}There exist a sequence $a_k$, which goes to  $ 0$ as $k\rightarrow+\infty$, and  sequences   $n_i(a_k)$, which  go to $\infty$ as $i\rightarrow +\infty$, satisfying the following $$
h_{\mu^{a_k}}(f|_{CH^1(p)})\geq\lim_{i\rightarrow\infty}\frac{1}{n_{i}(a_k)}\log\sharp
P^{a_k}_{n_{i}(a_k)}(f|_{CH^1(p)}).$$
\end{Lem}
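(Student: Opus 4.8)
The plan is to derive Lemma \ref{sequences} as a consequence of the classical lower bound for topological entropy in terms of the exponential growth rate of separated periodic orbits, applied on the level of the empirical measures $\mu^a_i$. Concretely, the strategy is the following. For a fixed $a$ and a fixed scale $\varepsilon>0$, observe that by Corollary \ref{expansive contants} the set $P^a_n(f|_{CH^1(p)})$ is $(n,\xi(a))$-separated; so if we further group the points of $P^a_n(f|_{CH^1(p)})$ into dynamical balls of radius $\varepsilon$ (for $\varepsilon<\xi(a)$ each such ball meets at most one of them, so no grouping is lost), the measures $\mu^a_i$ are uniformly distributed over a genuinely $(n_i,\varepsilon)$-separated set of cardinality $\sharp P^a_{n_i}(f|_{CH^1(p)})$. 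The point is that a normalized counting measure on a maximal (or merely large) $(n,\varepsilon)$-separated set of periodic points, in the limit, has measure-theoretic entropy at least the exponential growth rate of that cardinality. This is exactly the content of the standard Misiurewicz-type argument used to prove the variational principle's "$\geq$" direction: one builds a partition $\mathcal{Q}$ of small diameter (less than $\varepsilon$) whose boundary has $\mu^a$-measure zero, and estimates $H_{\mu^a_i}(\bigvee_{j=0}^{n_i-1}f^{-j}\mathcal{Q})$ from below by $\log \sharp P^a_{n_i}(f|_{CH^1(p)})$ since the atoms of the refined partition separate the periodic points; then a convexity/averaging argument over blocks of length $q$ (letting first $i\to\infty$, then $q\to\infty$) transfers this to $h_{\mu^a}(f|_{CH^1(p)})$.

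First I would fix $a=a_k$ from the already-chosen sequence $a_k\to 0$ and the associated subsequence $n_i=n_i(a_k)$ along which $\mu^{a_k}_{n_i}\to\mu^{a_k}$ weak-$*$, as set up just before the lemma statement. Second, I would choose $\varepsilon=\varepsilon(a_k)<\xi(a_k)/2$ and a finite measurable partition $\mathcal{Q}=\mathcal{Q}(a_k)$ of $CH^1(p)$ with $\operatorname{diam}\mathcal{Q}<\varepsilon$ and $\mu^{a_k}(\partial\mathcal{Q})=0$ (possible by a routine argument choosing partition boundaries avoiding the at-most-countably-many positive-measure level sets). Third, I would note that since $P^{a_k}_{n_i}(f|_{CH^1(p)})$ is $(n_i,\xi(a_k))$-separated and hence $(n_i,2\varepsilon)$-separated, distinct points lie in distinct atoms of $\mathcal{Q}^{n_i}:=\bigvee_{j=0}^{n_i-1}f^{-j}\mathcal{Q}$, so
$$H_{\mu^{a_k}_{n_i}}(\mathcal{Q}^{n_i})=\log\sharp P^{a_k}_{n_i}(f|_{CH^1(p)})$$
because $\mu^{a_k}_{n_i}$ is the uniform measure on this separated set. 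Fourth, for fixed $q\in\mathbb{N}$ and $n_i$ large, the standard subadditivity estimate
$$H_{\mu^{a_k}_{n_i}}(\mathcal{Q}^{n_i})\le \frac{n_i}{q}\,\frac{1}{n_i}\sum_{r=0}^{q-1}H_{(f^r)_*\mu^{a_k}_{n_i}}(\mathcal{Q}^q)+C(q)$$
together with $f$-invariance of $\mu^{a_k}_{n_i}$ (up to a negligible boundary term, since periodic orbits of period $n_i$ are exactly invariant) gives $\tfrac1{n_i}\log\sharp P^{a_k}_{n_i}\le \tfrac1q H_{\mu^{a_k}_{n_i}}(\mathcal{Q}^q)+o(1)$. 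Fifth, let $i\to\infty$: by weak-$*$ convergence and $\mu^{a_k}(\partial\mathcal{Q}^q)=0$ we get $H_{\mu^{a_k}_{n_i}}(\mathcal{Q}^q)\to H_{\mu^{a_k}}(\mathcal{Q}^q)$, hence
$$\lim_{i\to\infty}\frac{1}{n_i(a_k)}\log\sharp P^{a_k}_{n_i(a_k)}(f|_{CH^1(p)})\le \frac1q H_{\mu^{a_k}}(\mathcal{Q}^q);$$
finally let $q\to\infty$ to obtain $\le h_{\mu^{a_k}}(f,\mathcal{Q})\le h_{\mu^{a_k}}(f|_{CH^1(p)})$, which is the asserted inequality.

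The main obstacle I expect is the bookkeeping around $f$-invariance of the empirical measures $\mu^a_{n_i}$ and the boundary terms: strictly speaking $\mu^a_{n_i}$ is supported on points of period exactly $n_i$ and is $f$-invariant, so the averaging step is clean, but one must be careful that the partition $\mathcal{Q}$ can be chosen with $\mu^{a_k}(\partial\mathcal{Q})=0$ \emph{simultaneously} for the (single) limit measure $\mu^{a_k}$ while keeping $\operatorname{diam}\mathcal{Q}<\xi(a_k)/2$ — this is standard but is the place where a careless write-up breaks. A secondary point to handle cleanly is that the separation constant $\xi(a)$ from Corollary \ref{expansive contants} genuinely depends only on $a$ (and not on $n$), which is precisely what makes the separated-set cardinality directly comparable to a partition-entropy at a fixed scale; this is guaranteed by Lemma \ref{Separated} and Corollary \ref{expansive contants}. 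Everything else is the routine Misiurewicz argument, so I would only sketch those steps and cite the variational principle machinery rather than reproduce it.
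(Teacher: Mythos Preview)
Your proposal is correct and follows essentially the same route as the paper's proof: choose a finite partition of diameter below the expansive constant $\xi(a_k)$ from Corollary \ref{expansive contants} with $\mu^{a_k}$-null boundary, observe that the refined partition separates $P^{a_k}_{n_i}$ so that $H_{\mu^{a_k}_{n_i}}(\mathcal{Q}^{n_i})=\log\sharp P^{a_k}_{n_i}$, apply the Misiurewicz block decomposition at scale $q$, and pass to the limit first in $i$ (using weak-$*$ convergence and zero-boundary) and then in $q$. The only cosmetic difference is that the paper writes out the block decomposition explicitly with the index sets $S$ and the bound $2q^2\log m$, whereas you summarize it as the ``standard subadditivity estimate''; your remark that $\mu^{a_k}_{n_i}$ is genuinely $f$-invariant (being supported on period-$n_i$ orbits) is exactly what the paper uses to collapse the average over pushforwards.
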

 \begin{proof}
  It is a classical procedure similar
to the proof of the variational principle.
Let $\delta(a_k)$ be the expansive constants given in lemma
(\ref{expansive contants}). We may choose a finite partition
$\xi=\{A_1,\cdots, A_m\}$ such that $\diam (A_j)<\delta(a_k)$ and
$\mu^{a_k}(\partial A_j)=0$ for each $j$. Thus each member of
$\bigvee_{i=0}^{n_i(a_k)-1}f^{-i}\xi$ contains at most one point in
$P_{n_{i}(a_k)}^{a_k}(f)$. So\, $\sharp P_{n_{i}(a_k)}^{a_k}(f)$
members of $\bigvee_{i=0}^{n_i(a_k)-1}f^{-i}\xi$ each has
$\mu^{a_k}_{i}-$measure $1/ \sharp P_{n_{i}(a_k)}^{a_k}(f)$ and
others have $\mu^{a_k}_{i}-$measure zero. Fix
 natural number $q$ with $1<q<n_{i}(a_k)$. For $0\leq j\leq q-1$, define
 $a(j)=[(n_{i}(a_k)-j)/q]$. We have
 the following facts:
 \begin{enumerate}
 \item $a(0)\geq a(1)\geq\cdots \geq a(q-1)$;
\item Fix $0\leq j \leq q-1$. Then
$$\{0,1,\cdots, n_{i}(a_k)-1\}=\{j+rq+i\,\,|\,0\leq r\leq a(j)-1, 0\leq i\leq q-1\}\cup S$$
where
$$S=\{0,1,\cdots,j-1\}\cup \{j+a(j)q,j+a(j)q+1,\cdots,n_{i}(a_k)-1\},\text{ and } \sharp S\leq2q.$$
 \end{enumerate}
\smallskip
From these facts  we have
$$\bigvee_{i=0}^{n_{i}(a_k)-1}f^{-i}\xi=\bigvee_{r=0}^{a(j)-1}f^{-(rq+j)}
( \bigvee_{i=0}^{q-1}f^{-i}(\bigvee_{i\in S}f^{-i}\xi)).$$
Therefore
\begin{eqnarray*}\log\sharp
P_{n_{i}(a_k)}^{a_k}(f|_{CH^1(p)})&=&H_{\mu^{a_k}_{i}}(\bigvee_{i=0}^{n_{i}(a_k)-1}f^{-i}\xi)\\[2mm]
  &\leq&\sum_{r=0}^{a(j)-1}H_{\mu^{a_k}_{i}}(\bigvee_{r=0}^{a(j)-1}f^{-(rq+j)}
 \bigvee_{i=0}^{q-1}f^{-i}\xi)+\sum_{i\in
 S}H_{\mu^{a_k}_i}(f^{-i}\xi)\\[2mm]
 &\leq&\sum_{r=0}^{a(j)-1}H_{\mu^{a_k}_{i}\circ
 f^{-(rq+j)}}(\bigvee_{i=0}^{q-1}f^{-i}\xi)+2q\log m.
\end{eqnarray*}
Summing this inequality over $j$ from 0 to $q-1$, we obtain 
\begin{eqnarray*}
q\log\sharp
P_{n_{i}(a_k)}^{a_k}(f|_{CH^1(p)})&\leq&\sum_{l=0}^{n_{i}(a_k)-1}H_{\mu^{a_k}_{i}}\circ
f^{-l}(\bigvee_{i=0}^{q-1}f^{-i}\xi)+2q^2\log
m\\
&=&n_{i}(a_k)H_{\mu^{a_k}_{i}}(\bigvee_{i=0}^{q-1}f^{-i}\xi)+2q^2\log
m.
\end{eqnarray*} Divided by $n_{i}(a_k)q$, it becomes
\begin{equation}\label{1}\frac{1}{n_{i}(a_k)}\log\sharp
P_{n_{i}(a_k)}^{a_k}(f|_{CH^1(p)})\leq\frac{1}{q}
H_{\mu^{a_k}_{i}}(\bigvee_{i=0}^{q-1}f^{-i}\xi)+\frac{2q}{n_{i}(a_k)}\log
m.
\end{equation}
Since for each $B\in\bigvee_{i=0}^{q-1}f^{-i}\xi$ ,
$\mu^{a_k}_{i}(\partial(B))=0$, we have
$\lim_{i\rightarrow\infty}\mu_{i}^{a_k}(B)=\mu_{a_k}(B)$. Thus,
$$\lim_{i\rightarrow\infty}H_{\mu_{i}^{a_k}}(\bigvee_{i=0}^{q-1}f^{-i}\xi)
=H_{\mu^{a_k}}(\bigvee_{i=0}^{q-1}f^{-i}\xi).$$ 
Let the $i$ in equation (\ref{1}) go to infinity and then let the $q$ in equation (\ref{1}) go to infinity.  We obtain 
$$\lim_{i\rightarrow\infty}\frac{1}{n_{i}(a_k)}\log\sharp
P_{n_i(a_k)}^{a_k}(f|_{CH^1(p)})\leq
h_{\mu^{a_k}}(f|_{CH^1(p)}).$$
\end{proof}
\begin{proof}[The proof of Corollary \ref{measures}]The measure $ \mu=\lim_{k\rightarrow\infty}\mu^{a_k}$
  given above can be approximated by measures supported on hyperbolic periodic points.
By the variational principle, $h_{\mu}(f|_{CH^1(p)})\leq h_{\text{top}}(f|_{CH^1(p)})$. Combining Lemma \ref{sequences}, Theorem \ref{growth of periodic points } and the upper semi
continuous of entropy map, we deduce  that $h_{\mu}(f|_{CH^1(p)})\geq
h_{\text{top}}(f|_{CH^1(p)}).$ Thus, we finish the proof of Corollary \ref{measures}.
\end{proof}

%\section{Liv\v{s}ic Theorem for chain hyperbolic homoclinic classes} 
%In this section, we use the closing property in Theorem \ref{specification} to prove that Liv\v{s}ic Theorem holds for $CH^1(p)$. 
%\begin{Thm}Let $f$ be a diffeomorphism on a compact manifold $M$  with  $CH^1(p)$. Denote by $\mathcal{M}_f$ be the set of invariant probabilities supported in $CH^1(p)$. Let $\phi:M\rightarrow \mathbb{R}$ be a H\"{o}lder continuous function on $M$. If $f$ preserves the orientation of the subbundle $E^c$, then the following conditions are equivalent: 
%\begin{enumerate}
%\item $\sum^{P(q)}_{i=1}\phi(f^i(q))=0$ for any periodic point $q\in CH^1(p)$ with period $P(q)$;
%\item there exists a continuous solution $\Phi:M\rightarrow \mathbb{R}$ to the equation $\phi(x)=\Phi(f(x))-\Phi(x).$
%\end{enumerate}
%\end{Thm}
%\begin{proof}
%\end{proof}

\section*{Acknowledgement}
We  would  like to  thank Sylvain Crovisier for his useful suggestions. We would like  to thank Gang Liao and Xiaodong Wang for their useful
conversations.  We also would like to thank Jinxin Xue for his careful reading. The second author also would like to thank China Scholarship Council  for its financial support.

 \end{document}